\definecolor{TITLE}{rgb}{0.0,0.0,1.0}
\definecolor{AUTHOR1}{rgb}{0.00,0.59,0.00}
\definecolor{AUTHOR2}{rgb}{0.50,0.00,1.00}
\definecolor{SECTION}{rgb}{0.50,0.00,1.00}
\definecolor{FOOTTITLE}{rgb}{0.00,0.50,0.75}
\definecolor{THM}{rgb}{0.7,0.3,0.3}
\definecolor{SEC}{rgb}{0.6,0.1,.5}
\newtheorem{theorem}{{\color{THM} Theorem}}[section]
\newtheorem{lemma}[theorem]{{\color{THM}Lemma}}
\newtheorem{proposition}[theorem]{{\color{THM}Proposition}}
\newtheorem{corollary}[theorem]{{\color{THM}Corollary}}
\theoremstyle{definition}
\newtheorem{definition}[theorem]{{\color{THM}Definition\ }}
\newtheorem{remark}[theorem]{{\color{THM}Remark}}
\numberwithin{equation}{section}
\numberwithin{equation}{section}
\def\speaker{$ ^{*} $\protect\footnotetext{$ ^{*} $\lowercase{Corresponding Author.}}}
\begin{document}

\title {Banach Algebra of Complex Bounded Radon Measures on Homogeneous Space}

\author{ T. Derikvand\speaker$   $, R. A. Kamyabi-Gol and M. Janfada}

\address{International Campus, Faculty of Mathematic Sciences, Ferdowsi University of Mashhad, Mashhad, Iran}
\email{derikvand@miau.ac.ir}

\address{ Department of Pure Mathematics and  Centre of Excellence
in Analysis on Algebraic Structures (CEAAS), Ferdowsi University of
Mashhad, P.O. Box 1159, Mashhad 91775, Iran}
\email{kamyabi@um.ac.ir}

\address{ Department of Pure Mathematics, Ferdowsi University of
Mashhad, P.O. Box 1159, Mashhad 91775, Iran}
\email{Janfada@um.ac.ir}

\begin{abstract}
Let $ H $ be a compact subgroup of a locally compact group $G$. In this paper we define a convolution on $ M(G/H) $, the space of all complex bounded Radon measures on the homogeneous space G/H. Then we prove that the measure space $ M(G/H, *) $ is a non-unital Banach algebra that possesses an approximate identity. Finally, it is shown that the Banach algebra $ M(G/H, *) $ is not involutive and also $ L^1(G/H, *) $ is a two-sided ideal of it. \end{abstract}
\subjclass[2010]{Primary: 43A15; Secondary: 43A85.}

\keywords{complex Radon measure, homogeneous spaces , convolution, Banach algebra.}

\maketitle
                                                                                                                                                                                                                                                                                                                                                                                                                                                                                                                                                                                                                                                                                                                                                                                                                                                                                                                                                                                                                                                                                                                                                                                                                                                                                                                                                                                                                                                                                                                                                                                                                                                                                                                                                                                                                                                                                                                                                                                                                                                                                                                                                                                                                                                                                                                                                                                                                                                                                                                                                                                                                                                                                                                                                                                                                                                                                                                                                                                                                                                                                                                                                                                                                                                                                                                                                                                                                                                                                                                                                                                                                                                                                                                                                                                                                                                                                                                                                                                                                                                                                                                                                                                                                                                                                                                                                                                                                                                                                                                                                                                                                                                                                                                                                                                                                                                                                                                                                                                                                                                                                                                                                                                                                                                                                                                                                                                                                                                                                                                                                                                                                                                                                                                                                                                                                                                                                                                                                                                                                                                                                                                                                                                                                                                                                                                                                                                                                                                                                                                                                                                                                                                                                                        \section{INTRODUCTION AND PRELIMINARIES}
Let $G$ be a locally compact group. The convolution on $M(G)$, the space of all complex bounded Radon measures on $G$, is defined by
\begin{align}
\mu_{1} \ast \mu_{2}(f)=\int_{G}{\int_{G}{f(xy) d\mu_{1}(x) d\mu_{2}(y)}}\qquad (f\in C_{c}(G)),\label{eq11}
\end{align}
For any two complex bounded Radon measures $ \mu_{1}, \mu_{2} \in M(G) $. It is well-known that $ (M(G), \ast) $ is a unital Banach algebra, it is called the measure algebra. Now it is worthwhile to investigate how this can be done within $ M(G/H) $, where $H$ is a compact subgroup of locally compact group $G$. 
We should clear that $H$ is not normal subgroup necessarily, so $ G/H $ does not possess a group structure but it will be a locally compact Hausdorff  space.\\
Throughout  this paper $H$ is a compact subgroup of locally compact group $G$ and the homogeneous space $ G/H $ is a space on which $G$ acts transitively by left. The theory of homogeneous spaces has many applications in physics and engineering. For example, if the Euclidian group $E(2)$ acts transitively on $\mathbb{R}^2$, then the isotropy subgroup of origin is the orthogonal group $O(2)$. In that sequel, the homogeneous space $ E(2)/O(2) $ provides definition of X-ray transform that is used  in many areas such as radio astronomy, positron emission tomography, crystallography, etc (See e. g. \cite{1}, Ch. 1).\\
The modular function $ \triangle_{G} $ is a continuous homomorphism from $G$ into the multiplicative group $\mathbb{R}^{+} $. Furthermore, for all $ x\in G $
\[\int_{G} f(y)dy=\triangle_{G}(x)\int_{G} f(yx)dy\]
where $ f\in C_{c}(G) $, the space of continuous functions on $ G $ with compact support. A locally compact group $ G $ is called unimodular if $ \triangle_{G}(x)=1 $, for all $ x \in G $. A compact group $ G $ is always unimodular.
Assume that $H$ is a closed subgroup of the locally compact group $ G $, it is known that $C_c(G/H)$ consists of all $P_{H}f$ functions, where $f\in C_c(G)$ and
\[P_{H}f(xH)=\int_{H}f(xh)dh\quad(x\in G).\]
Moreover, $P_{H}:C_c(G)\rightarrow C_c(G/H)$ is a bounded linear operator which is not injective (See e. g. \cite{5}, Ch. 2, Section. 6). Suppose that $ \mu $ is a Radon measure on $ G/H $. For all $ x\in G $ we
define the translation of $ \mu $ by $ x $, by $ \mu_{x}(E)=\mu (xE) $, where $ E $ is a Borel subset of $ G/H $.
Then $ \mu $ is said to be $ G-$invariant if $ \mu_{x}=\mu $, for all $ x\in G $, if $ H $ is compact, $ G/H $ admits a $ G- $invariant Radon measure (See e. g. \cite{5}, Corollary 2. 51).\\
$ \mu $ is said to be strongly quasi-invariant, if there is a continuous function $ \lambda:G\times G/H\rightarrow (0, +\infty) $ which satisfies
\[d\mu_{x}(yH)=\lambda(x, yH)d\mu(yH).\]
If the function $ \lambda(x, .) $ is  reduced to a constant for each $ x \in G $, then $ \mu $ is called relatively invariant under $ G $. We consider a rho-function for the pair $ (G, H) $ as a continuous function $ \rho:G\rightarrow(0, +\infty) $ for which $\rho(xh)=\triangle_{H}(h)\triangle_{G}(h)^{-1}\rho(x)$, for each $x\in G$ and $h\in H$. It is well known that $ (G, H) $ admits a rho-function and for every rho-function $ \rho $ there is a strongly quasi-invariant measure $ \mu $ on $ G/H $ such that
\[\int_{G} f(x)dx=\int_{G/H}P_{H}f(xH)d\mu(xH) \qquad (f\in C_{c}(G)),\]
where, in this case, $ P_{H}f(xH)=\int_{H}\frac{f(xh)}{\rho(xh)} dh $ and the equation above is called quotient integral formula . The measure $\mu$ also satisfies
\[\frac{d\mu_{x}}{d\mu}(yH)=\frac{\rho(xy)}{\rho(y)}\qquad (x,y \in G).\]
If $ \mu $ is a strongly quasi invariant measure on $G/H$ which is associated with the rho$-$function $\rho$ for the pair $ (G, H) $, then the mapping $T_{H}: L^{1}(G)\rightarrow L^{1}(G/H)$ is defined almost everywhere by
\[T_{H}f(xH)=\int_{H}\frac{f(xh)}{\rho(xh)}dh \qquad (f\in L^{1}(G))\],
is a surjective bounded linear operator with $\parallel T_{H} \parallel \leq1$ (see \cite{8}, Subsection 3.4) and also satisfies the generalized Mackey-Bruhat formula,
\begin{align}
\int_{G}f(x)dx=\int_{G/H}T_{H}f(xH)d\mu(xH)\qquad (f\in L^{1}(G)),\label{eq21}
\end{align}
which is also known as the quotient integral formula.\\
Two useful operators left translation and right translation, denoted by $ \emph{L} $ and $ \emph{R} $ respectively, plays crucial role in what follows. The left translation of $ \varphi $ by $ x \in G $ is defined by $ \emph{L}_{x} (\varphi)(yH)=\varphi(x^{-1}yH) $, where $ \varphi \in C_{c}(G/H) $. In a similar way, the left translation operator is defined for the integrable function on a homogeneous space $ G/H $ as follows:
\[\emph{L}_{x} (\varphi)(yH)=\varphi(x^{-1}yH) \qquad (\mu-almost all yH \in G/H)\],
where $ \varphi \in L^{p}(G/H) $, $ 1 \leq p \leq \infty $. The mapping $ x\mapsto \emph{L}_{x} (\varphi) $ is continuous and also $ \Vert \emph{L}_{x} (\varphi) \Vert_{p}=\left( \frac{\rho(x)}{\rho(e)}\right)^{1/p} \Vert \varphi \Vert_{p} $. The right translation is defined in the same manner. For more details see \cite{7}.
Now, let $H$ be a compact subgroup and consider 
\begin{center}
$ C_c(G  :  H):=\lbrace f \in C_c(G) : R_{h}f=f $,   $ \forall h \in H\rbrace$,\\
\end{center}
where $ R_{h} $ denotes the right translation through $ h $.  
Let $ \mu $ be a $ G-$invariant Radon measure on $ G/H $. One can easily prove that
\begin{center}
$ C_c(G  :  H)=\lbrace \varphi_{\pi_{H}}:=\varphi \circ \pi_{H} : \varphi \in C_c(G/H) \rbrace$,\\
\end{center}
is a left ideal of the algebra $ C_c(G) $ and $ P_{H} $ is an algebraic isometric isomorphism between $C_c(G  :  H)$ and $C_c(G/H)$. Furthermore, $  P_{H}(\varphi_{\pi_{H}})=\varphi $, for all $\varphi \in C_c(G/H)$. 
These results can be extended, by approximation, to $T_{H}:L^{1}(G  :  H)\rightarrow L^{1}(G/H)$, where  
\begin{center}
$ L^{1}(G  :  H):=\lbrace f \in L^{1}(G) : R_{h}f=f $,   $ \forall h \in H\rbrace$,
\end{center}
(See e. g. \cite{8}, P. 98) and also (See e. g. \cite{3}, \cite{7}).
Therein $T_{H}$ is an algebraic isometrically isomorphism. By using this isomorphism one can define a well-defined convolution on $ L^{1}(G/H) $. Let $ \lambda $ ba a strongly quasi-invariant measure on $ G/H $ that arises from the rho-function $ \rho $ then:
\begin{align*}
\varphi \ast \psi(xH)
&=T_{H}(\varphi_{\pi_{H}}\ast \psi_{\pi_{H}})(xH)=\int_{G/H}{\int_{H}{\varphi(yH)\psi(hy^{-1}xH)\frac{\rho(hy^{-1}x)}{\rho(x)}dhd\lambda(yH)}}.
\end{align*}
Now, let $ M(G) $ be the space of all complex bounded Radon measures on locally compact group $G$ and $H$ be a compact subgroup of $G$ and also assume that $ \mu \in M(G) $. One can define $ \sigma_{\mu} \in M(G/H) $ by
\begin{align}
\int_{G/H}\varphi(xH)d\sigma_{\mu}(xH)=\int_{G} \varphi_{{\pi}_{H}}(x)d \mu(x) \qquad (\varphi \in C_{c}(G/H)).\label{eq13}
\end{align}
In other words $ \sigma_{\mu} ( \varphi )= \mu ( \varphi_{\pi_{H}}) $, for all $ \varphi $ in $ C_{c}(G/H) $. Since $ \Vert \sigma_{\mu} \Vert  \leqslant \Vert \mu \Vert $,  the linear map $ \mu \mapsto \sigma_{\mu} $ is continuous and it can be shown that this map is surjective (See e. g. \cite{8}, P. 233).
\section{THE MAIN RESULTS}
Let us denote the space of all complex bounded Radon measures on locally compact Hausdorff space $G/H$ by  $ M(G/H) $. In this section we establish some results to define a well-defined convolution on $ M(G/H) $ which makes it to a Banach algebra, then we introduce an approximate identity for it; After that the relationship between two Banach algebras $ M(G/H) $ and $ L^{1}(G/H) $ is described, the last result asserts that $ L^{1}(G/H) $ can be regarded as a Banach subalgebra of $ M(G/H) $.\\
From now on, we consider $ H $ as a compact subgroup of locally compact group $ G $.\\
We first introduce two crucial subalgebras of the measure algebra $ M(G) $. Consider the following notation for the space of $ C_c(G : H)- $invariant measures in $ M(G) $.									\begin{center}
$ M_{H}(G):=\lbrace \mu \in M(G) : \mu(f*\varphi_{\pi_{H}})=\mu(f) $,   $\forall f \in C_c(G),$   $ \forall \varphi_{\pi_{H}} \in C_c(G : H) \rbrace$.\\
\end{center}
\begin{proposition}\label{pr1}
Let $H$ be a compact subgroup of a locally compact group $ G $. Then $ M_{H}(G) $ is a closed left ideal of $ M(G) $.
\end{proposition}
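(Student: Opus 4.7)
The plan is to check the three standard properties---linear subspace, norm-closedness, and left-absorption under convolution---with only the third requiring a calculation.

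For the subspace and closedness, observe that $M_H(G)$ is cut out of $M(G)$ as the intersection, over pairs $(f,\varphi_{\pi_H}) \in C_c(G) \times C_c(G : H)$, of the kernels of the linear functionals $\mu \mapsto \mu(f \ast \varphi_{\pi_H}) - \mu(f)$. Each such functional is norm-continuous on $M(G)$, because $f$ and $f \ast \varphi_{\pi_H}$ both lie in $C_c(G)$ and $|\mu(g)| \le \|\mu\|\,\|g\|_\infty$ for every $g \in C_c(G)$. Hence $M_H(G)$ is a norm-closed linear subspace.

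For the left-ideal property, fix $\nu \in M(G)$, $\mu \in M_H(G)$, $f \in C_c(G)$, and $\varphi_{\pi_H} \in C_c(G : H)$, and aim to show $(\nu \ast \mu)(f \ast \varphi_{\pi_H}) = (\nu \ast \mu)(f)$. Expanding the left-hand side via (\ref{eq11}),
\[
(\nu \ast \mu)(f \ast \varphi_{\pi_H}) = \int_G \int_G (f \ast \varphi_{\pi_H})(xy)\, d\mu(y)\, d\nu(x).
\]
The crux is the pointwise identity
\[
(f \ast \varphi_{\pi_H})(xy) = \bigl((L_{x^{-1}} f) \ast \varphi_{\pi_H}\bigr)(y),
\]
obtained from $\int_G f(w)\varphi_{\pi_H}(w^{-1}xy)\,dw$ by the substitution $w = xu$ together with left-invariance of Haar measure on $G$. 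Since $L_{x^{-1}} f \in C_c(G)$, the defining property of $M_H(G)$ collapses the inner $y$-integration to $\mu\bigl((L_{x^{-1}} f) \ast \varphi_{\pi_H}\bigr) = \mu(L_{x^{-1}} f) = \int_G f(xy)\, d\mu(y)$, and integrating against $\nu$ reassembles $(\nu \ast \mu)(f)$.

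The only real work is the translation identity above, plus a Fubini-type interchange to justify rewriting the triple integral; both are routine for compactly supported continuous integrands paired with bounded Radon measures, so no serious obstacle is expected. The argument also makes clear why the ideal is one-sided: the same computation with $\mu \ast \nu$ would produce $(f \ast \varphi_{\pi_H})(xy)$ as a function of $x$ for fixed $y$, for which the $M_H(G)$-hypothesis on $\mu$ is no longer the relevant equality.
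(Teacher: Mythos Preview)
Your proof is correct and follows essentially the same route as the paper: the key step in both is the translation identity $(f\ast\varphi_{\pi_H})(xy)=((L_{x^{-1}}f)\ast\varphi_{\pi_H})(y)$, after which the $M_H(G)$-hypothesis on the right factor collapses the inner integral. Your treatment of closedness via kernels of continuous functionals is a slightly cleaner packaging of the paper's net argument, and you go directly to the left-ideal statement rather than first checking the subalgebra property, but these are cosmetic differences rather than a different approach.
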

 \begin{proof}
Let $ \mu_{1}, \mu_{2} \in M_{H}(G) $ and also $ f \in C_c(G) $. Then, for all $\varphi_{\pi_{H}}$ we have
\begin{align*}
\mu_{1} \ast \mu_{2}(f \ast \varphi_{\pi_{H}})
&=\int_{G}{f \ast \varphi_{\pi_{H}} (g) d(\mu_{1} \ast \mu_{2})(g)}\\
&=\int_{G}{\int_{G}{f \ast \varphi_{\pi_{H}}(xy) d\mu_{1}(x) d\mu_{2}(y)}}\\
&=\int_{G}{\int_{G}{f \ast \varphi_{\pi_{H}}(xy) d\mu_{2}(x) d\mu_{1}(y)}}\\
&=\int_{G}{\int_{G}{L_{x^{-1}}(f \ast \varphi_{\pi_{H}})(y) d\mu_{2}(x) d\mu_{1}(y)}}\\
&=\int_{G}{\int_{G}{((L_{x^{-1}}f) \ast \varphi_{\pi_{H}})(y) d\mu_{1}(x) d\mu_{2}(y)}}\\
&=\int_{G}{\int_{G}{(L_{x^{-1}}f)(y) d\mu_{1}(x) d\mu_{2}(y)}}\\
&=\int_{G}{\int_{G}{f(xy) d\mu_{1}(x) d\mu_{2}(y)}}\\
&=\mu_{1} \ast \mu_{2}(f).
\end{align*}
Therefore $ \mu_{1} \ast \mu_{2} \in M_{H}(G) $. 
In a similar way, a direct calculation shows that    $ M_{H}(G) $ is a left ideal of $ M(G) $. Furthermore, let $ \lbrace \mu_{\alpha} \rbrace_{\alpha\in\Lambda} $ be a net of elements of $M_{H}(G)$ approaches to $ \mu $ in $ M_{H}(G) $, then for all $ f \in C_c(G) $ and $\varphi_{\pi_{H}} \in C_c(G : H) $ we have
\begin{center}
$ \mu(f \ast \varphi_{\pi_{H}})=$lim$\mu_{\alpha}(f \ast \varphi_{\pi_{H}})=$lim$\mu_{\alpha}(f)=\mu(f)$.
\end{center}
 Hence the proof is complete.
\end{proof}
Another important closed left ideal of $ M(G) $ will be introduced in what follows. Let
\begin{center}
$ M(G : H)=\lbrace \mu \in M(G) : \mu(R_{h}f)=\mu(f) ; \forall f \in C_c(G), h \in H \rbrace $,
\end{center}
where $ R_{h} $ denotes the right translation through $ h $. 

\begin{proposition}\label{pr22}
Let $H$ be a compact subgroup of a locally compact group $ G $ and $ \mu $ be a left Haar measure on $ G $. Then 
\begin{center}
$ M(G : H)=\lbrace \mu_{f} : f \in L^{1}(G : H) \rbrace $,
\end{center}
where $ d\mu_{f}(x)=f(x)d\mu(x) $.
\end{proposition}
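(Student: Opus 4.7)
The plan is to prove the claimed equality by two inclusions, the first routine and the second resting on an absolute-continuity argument.

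For the inclusion $\{\mu_f : f \in L^1(G:H)\} \subseteq M(G:H)$, I would take $f \in L^1(G:H)$, $g \in C_c(G)$, and $h \in H$, and unfold $\mu_f(R_h g) = \int_G g(xh) f(x)\, d\mu(x)$. The natural move is to apply the right-translation change of variable to the left Haar measure $\mu$, which introduces the modular factor $\triangle_G(h^{-1})$. Compactness of $H$ enters here in a crucial but elementary way: the restriction $\triangle_G|_H$ is a continuous homomorphism from $H$ into $(0,\infty)$, so its image is a compact subgroup of $(0,\infty)$, hence trivial. Thus $\triangle_G(h) = 1$, and combined with the defining relation $R_{h^{-1}} f = f$ for members of $L^1(G:H)$, the chain collapses to $\mu_f(R_h g) = \mu_f(g)$.

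For the reverse inclusion $M(G:H) \subseteq \{\mu_f : f \in L^1(G:H)\}$, take $\nu \in M(G:H)$; the goal is to produce $f \in L^1(G:H)$ with $\nu = \mu_f$. The natural strategy is to obtain the density via the Radon-Nikodym theorem and then check its right-$H$-invariance. Assuming $\nu \ll \mu$, I write $d\nu = f\, d\mu$ for some $f \in L^1(G)$, and then run the forward computation in reverse: the hypothesis $\nu(R_h g) = \nu(g)$, together with $\triangle_G(h) = 1$, yields $\int_G g \cdot (R_{h^{-1}} f - f)\, d\mu = 0$ for every $g \in C_c(G)$, which forces $R_{h^{-1}} f = f$ almost everywhere for each $h \in H$, so $f \in L^1(G:H)$.

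The principal obstacle is the absolute continuity step $\nu \ll \mu$. My plan is to exploit the averaging identity $\nu = \nu * \nu_H$, valid for $\nu \in M(G:H)$, where $\nu_H$ denotes the normalized Haar measure on the compact subgroup $H$. Using the fibration $G \to G/H$ and the quotient integral formula (\ref{eq21}), I would first push $\nu$ down to a bounded Radon measure on $G/H$ via the construction in (\ref{eq13}), analyze its regularity there, and then lift back to a density on $G$ through the surjective contraction $T_H$ recalled in the preliminaries. Balancing the interplay between the quasi-invariant measure on $G/H$, the modular function $\triangle_G$, and the left Haar measure $\mu$ on $G$ is the most delicate aspect of the argument and, in my view, the real content of the proposition.
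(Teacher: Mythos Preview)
Your forward inclusion matches the paper's argument exactly.

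For the reverse inclusion you are actually more scrupulous than the paper. The paper opens that direction with ``Now let $\mu_f \in M(G:H)$ for some $f \in L^1(G)$'', thereby \emph{assuming} from the outset that the given measure is absolutely continuous with respect to Haar measure; what the paper really proves is only
\[
\{\mu_f : f \in L^1(G)\}\cap M(G:H)=\{\mu_f : f \in L^1(G:H)\},
\]
not the equality $M(G:H)=\{\mu_f : f\in L^1(G:H)\}$ claimed in the statement. You correctly isolate the step $\nu\ll\mu$ as the principal obstacle.

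However, your plan to establish $\nu\ll\mu$ cannot succeed, because the proposition as stated is false. Let $\nu_H$ be the normalized Haar measure on $H$, regarded as a bounded Radon measure on $G$ supported on $H$. Right invariance of Haar measure on $H$ gives $\nu_H(R_h f)=\int_H f(\eta h)\,d\eta=\int_H f(\eta)\,d\eta=\nu_H(f)$, so $\nu_H\in M(G:H)$. But whenever $H$ is not open in $G$ (equivalently $\mu(H)=0$), the measure $\nu_H$ is singular with respect to $\mu$ and hence is not of the form $\mu_f$ for any $f\in L^1(G)$. In the extreme case $H=\{e\}$ the statement would read $M(G)=L^1(G)$, contradicted by $\delta_e$ for any non-discrete $G$. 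Your averaging identity $\nu=\nu*\nu_H$ is correct, but convolving against $\nu_H$ cannot manufacture absolute continuity, since $\nu_H$ itself is the counterexample; so the program of pushing down to $G/H$ and lifting back through $T_H$ is bound to break at this point.
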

\begin{proof}
For any $ f \in L^{1}(G : H) $, it is clear that $ \mu_{f}(x)=f(x)d\mu(x) \in M(G) $ and also for all $ g \in C_c(G) $ and $ h \in H $ we have
\begin{align*}
\mu_{f}(R_{h}g)
&=\int_{G}{R_{h}g(x)d\mu_{f}(x)}\\
&=\int_{G}{g(xh)f(x)d\mu(x)}\\
&=\int_{G}{g(x)f(xh^{-1})d\mu(xh^{-1})}\\
&=\int_{G}{g(x)d\mu_{f}(x)}\\
&=\mu_{f}(g).
\end{align*}
Note that since $ H $ is compact, $ \Delta_{G}\vert_{H}=1 $.\\
Now let $ \mu_{f} \in M(G : H) $ for some $ f \in L^{1}(G) $, so $ \mu_{f}(R_{h}g)=\mu_{f}(g) $ for all $ g \in C_c(G) $. In other words, $ \int_{G}{R_{h}g(x)d\mu_{f}(x)}=\int_{G}{g(x)d\mu_{f}(x)} $. Since
\begin{align*}
\int_{G}{g(xh)f(x)d\mu(x)}=\int_{G}{g(x)f(x)d\mu(x)},
\end{align*}
we have 
\begin{align*}
\int_{G}{g(x)f(xh^{-1})d\mu(xh^{-1})}=\int_{G}{g(x)f(xh^{-1})d\mu(x)}=\int_{G}{g(x)f(x)d\mu(x)},
\end{align*}
for all $ g \in C_c(G) $. Therefore, $ \int_{G}{g(x) \left( f(xh)-f(x) \right)d\mu(x)}=0$ for all $ g \in C_c(G) $ and $ h \in H$. Then by Urysohn's Lemma to take suitable $ g \in C_c(G) $ we get $ f(xh)=f(x) $, for all $ x \in G $ and  $ h \in H $. Thus $ f\in L^{1}(G : H) $.
\end{proof}
\begin{proposition}\label{pr2}
Let $H$ be a compact subgroup of a locally compact group $ G $. Then $ M(G : H) $ is a closed left ideal of $ M(G) $. Moreover,
\begin{center}
$ M(G : H)=\lbrace \sigma_{P_{H}}:=\sigma \circ P_{H} : \sigma \in M(G/H) \rbrace $.
\end{center}
\end{proposition}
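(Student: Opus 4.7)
The plan is to follow the template of Proposition \ref{pr1} for the closed left ideal structure, and then establish the set equality by constructing an explicit inverse to $\sigma \mapsto \sigma \circ P_{H}$. Closedness is immediate: if a net $\{\mu_{\alpha}\} \subset M(G:H)$ converges to $\mu$, then $\mu(R_{h}f) = \lim_{\alpha} \mu_{\alpha}(R_{h}f) = \lim_{\alpha} \mu_{\alpha}(f) = \mu(f)$ for every $f \in C_{c}(G)$ and $h \in H$. For the left ideal property, I would take $\nu \in M(G)$ and $\mu \in M(G:H)$, then compute $\nu \ast \mu(R_{h}f)$ via Fubini, using the identity $R_{h}f(xy) = f(xyh) = R_{h}(L_{x^{-1}}f)(y)$; applying the invariance of $\mu$ to $y \mapsto (L_{x^{-1}}f)(y)$ inside the inner integral yields $\nu \ast \mu(R_{h}f) = \nu \ast \mu(f)$.

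For the set equality, the inclusion $\{\sigma \circ P_{H} : \sigma \in M(G/H)\} \subseteq M(G:H)$ is straightforward: after normalizing the Haar measure on the compact group $H$ to have total mass one, $\|P_{H}f\|_{\infty} \leq \|f\|_{\infty}$, so $\sigma \circ P_{H}$ defines a bounded linear functional on $C_{c}(G)$ and hence an element of $M(G)$; and for $h \in H$, the right-invariance of Haar measure on $H$ gives $P_{H}(R_{h}f)(xH) = \int_{H} f(xh'h)\,dh' = P_{H}f(xH)$, so $\sigma \circ P_{H} \in M(G:H)$.

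For the reverse inclusion, given $\mu \in M(G:H)$, I propose to define $\sigma(\varphi) := \mu(\varphi_{\pi_{H}})$ for $\varphi \in C_{c}(G/H)$. Linearity is clear and $|\sigma(\varphi)| \leq \|\mu\|\,\|\varphi\|_{\infty}$, so $\sigma$ extends to an element of $M(G/H)$. The core identity to verify is
\[
\mu(f) = \mu\bigl((P_{H}f)_{\pi_{H}}\bigr) \qquad (f \in C_{c}(G)),
\]
which then immediately gives $\mu(f) = \sigma(P_{H}f) = (\sigma \circ P_{H})(f)$. To prove it, I would write $(P_{H}f)_{\pi_{H}}(y) = \int_{H} f(yh)\,dh = \int_{H} R_{h}f(y)\,dh$ and exchange $\mu$ with the $H$-integral:
\[
\mu\bigl((P_{H}f)_{\pi_{H}}\bigr) = \int_{H} \mu(R_{h}f)\,dh = \int_{H} \mu(f)\,dh = \mu(f).
\]
The main technical obstacle is justifying this Fubini-type exchange. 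It relies on the continuity of $h \mapsto R_{h}f$ into $C_{c}(G)$, compactness of $H$ (which forces $\int_{H} R_{h}f\,dh$ to exist as a $C_{c}(G)$-valued Bochner integral coinciding with the pointwise function $(P_{H}f)_{\pi_{H}}$), and boundedness of $\mu$, which together permit the swap. Once this identity is in place, the converse inclusion and the characterization follow at once.
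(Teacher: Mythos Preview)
Your proposal is correct and follows essentially the same route as the paper: the closedness and left ideal arguments are identical, and for the set equality both you and the paper define $\sigma(\varphi):=\mu(\varphi_{\pi_{H}})$ via \eqref{eq13} and then verify $\mu=\sigma\circ P_{H}$. Your treatment of the reverse inclusion is in fact cleaner: the paper's chain of equalities at that step is somewhat opaque (the passage from $\int_{G}R_{h}f\,d\mu$ to a double integral over $G/H\times H$ is not properly justified there), whereas your Bochner-integral argument for $\mu(f)=\int_{H}\mu(R_{h}f)\,dh=\mu\bigl((P_{H}f)_{\pi_{H}}\bigr)$ makes the exchange explicit and correct.
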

\begin{proof}
Let $ \mu_{1}, \mu_{2} \in M(G : H)$, then for all $ f \in C_c(G) $ and $ h \in H $ we have
\begin{align*}
\mu_{1} \ast \mu_{2}(R_{h}f)
&=\int_{G}{R_{h}f (g) d(\mu_{1} \ast \mu_{2})(g)}\\
&=\int_{G}{\int_{G}{R_{h}f(xy) d\mu_{1}(x) d\mu_{2}(y)}}\\
&=\int_{G}{\mu_{1}(R_{h}(R_{y}f))(y) d\mu_{2}(y)}\\
&=\int_{G}{\mu_{1}(R_{y}f)(y) d\mu_{2}(y)}\\
&=\int_{G}{\int_{G}{f(xy) d\mu_{1}(x) d\mu_{2}(y)}}\\
&=\mu_{1} \ast \mu_{2}(f).
\end{align*}
Therefore $ \mu_{1} \ast \mu_{2} \in M(G : H) $. 
A similar calculation shows that    $ M(G : H) $ is a left ideal of $ M(G) $. Furthermore, let $ \mu $ be limit of  the net $ \lbrace \mu_{\alpha} \rbrace_{\alpha\in\Lambda} $ in $M(G : H)$, then $ \mu(R_{h}f)=$lim$\mu_{\alpha}(R_{h}f)$  for all $ f \in C_c(G) $ and $ h \in H $. But $\mu_{\alpha}(R_{h}f)=\mu_{\alpha}(f)$ and this implies that $ \mu(R_{h}f)=\mu(f) $. It remains to prove the equality in this Proposition. Let $\sigma \in M(G/H)$ then 
$ \sigma \circ P_{H} $ is a bounded linear functional on $ C_c(G) $, since
\begin{align*}
\vert \sigma \circ P_{H}(f) \vert
&=\vert \sigma(P_{H}(f)) \vert\\
&=\vert \int_{G/H}{P_{H}f(xH)d \sigma(xH)} \vert\\
&=\int_{G/H}{\vert \int_{H}{f(xh) dh\vert d\vert \sigma \vert (xH)}} \\
&\leqslant \Vert f \Vert_{\infty}\Vert \sigma \Vert.
\end{align*}
Thus $\Vert \sigma \circ P_{H} \Vert \leq \Vert \sigma \Vert < \infty$  ), so that the mapping $ \sigma \circ P_{H} $ is a bounded linear functional on $ C_{c}(G) $. Furthermore, for all $ f \in C_c(G) $ and $ h \in H $ we have
\begin{align*}
\sigma \circ P_{H}(R_{h}f)
&=\int_{G/H}{\int_{H}{R_{h}f(x \eta) d \eta d \sigma(xH)}}\\
&=\int_{G/H}{\int_{H}{f(x \eta h) d \eta d\sigma(xH)}}\\
&=\int_{G/H}{\int_{H}{f(x \eta) d \eta d\sigma(xH)}}\\
&=\sigma \circ P_{H}(f).
\end{align*}
Thus $ \sigma_{P_{H}}=\sigma \circ P_{H} \in M(G : H) $ for all $ \sigma \in M(G/H) $. To show the reverse inclusion let $\mu$ be in $M(G)$ such that $ \mu(R_{h}f)=\mu(f)$ for all $ f $ in $ C_c(G) $ and $ h $ in $H$. Then by \eqref{eq13} there exists $ \sigma \in M(G/H) $ such that for all $ f $ in $ C_c(G)$ we have
\begin{align*}
\mu(f)
&=\int_{G}{f(x) d\mu(x)}\\
&=\int_{G}{R_{h}f(x) d\mu(x)}\\
&=\int_{G/H}{\int_{H}{f(x \eta h ) d \eta d\sigma(xH)}}\\
&=\int_{G/H}{\int_{H}{f(x \eta) d \eta d\sigma(xH)}}\\
&=\sigma \circ P_{H}(f).
\end{align*}
So $ \mu=\sigma \circ P_{H}$
and the proof is complete.
\end{proof}
Now, consider the map $R_{H}:M(G) \rightarrow M(G/H)$ given by
\begin{align}
R_{H}  \mu(\varphi):=\mu(\varphi_{\pi_{H}})=\int_{G}\varphi_{\pi_{H}}(x)d\mu(x)\quad(\varphi \in C_{c}(G/H)). \label{eq21}
\end{align}
Let $ \varphi=\psi \in C_{c}(G/H)) $ then $ \varphi_{\pi_{H}}=\varphi \circ \pi_{H}=\psi \circ \pi_{H}=\psi_{\pi_{H}} $. Hence $ \mu(\varphi_{\pi_{H}})=\mu(\psi_{\pi_{H}}) $ and this implies that $R_{H}  \mu(\varphi)=R_{H}  \mu(\psi)$. From the definition we can easily deduce that $ R_{H} \mu $ is a positive linear functional on $ C_{c}(G/H) $. So by the Riesz representation theorem there exists a unique Radon measure $ \sigma \in M(G/H) $ such that
\begin{align}
R_{H} \mu(\varphi)=\int_{G/H}\varphi(xH)d\sigma(xH)=\sigma(\varphi). \label{eq22} 
\end{align}
Then $ R_{H} \mu = \sigma \in M(G/H) $. Also based on the definition \eqref{eq21} it is clear that $  R_{H} (\mu_{1}) =R_{H}( \mu_{2}) $ if $ \mu_{1} = \mu_{2}$. So $ R_{H}$ is a well-defined map. To show that the mapping $ R_{H} $ is linear, consider an arbitrary scalar $\alpha$ and the elements $ \mu_{1} $ and $ \mu_{2} $ in $ M(G) $. Then for any $  \varphi $ in $ C_{c}(G/H) $ we have
\begin{align*}
R_{H}  (\mu_{1}+\mu_{2})(\varphi)
&=(\mu_{1}+\mu_{2})(\varphi_{\pi_{H}})\\
&= \mu_{1}(\varphi_{\pi_{H}})+\mu_{2}(\varphi_{\pi_{H}})\\ &=R_{H} \mu_{1}(\varphi) + R_{H}\mu_{2}(\varphi)\\
&=(R_{H} \mu_{1}+ R_{H}\mu_{2})(\varphi),
\end{align*}
thus $ R_{H} $ is linear. We shall show that $ R_{H} $ is a bounded operator. To do this, if we consider any $ \varphi $ in $ C_{c}(G/H) $ then we have\\
\begin{align*}
\vert R_{H}  \mu(\varphi) \vert
=\vert \int_{G} \varphi_{{\pi}_{H}}(x)d \mu(x) \vert
\leqslant \int_{G} \vert \varphi(xH) \vert d \vert \mu \vert (x)
\leqslant \int_{G} \Vert \varphi \Vert_{\infty} d \vert \mu \vert (x)
\leqslant \Vert \mu \Vert \Vert \varphi \Vert_{\infty}  .
\end{align*}
So $ \Vert R_{H} \mu \Vert \leqslant \Vert \mu \Vert < \infty$ and this implies $ \Vert R_{H} \Vert \leqslant 1$.											For surjectivity, let $ \sigma \in M(G/H) $ and define $ \mu $ on $ C_{c}(G) $ by
\begin{align}
\mu(f):=\sigma(P_{H}f) \quad(f \in C_{c}(G)). \label{eq23}
\end{align}
Suppose  $ \varphi \in C_{c}(G/H) $, using the Proposition \ref{pr2},for $ \mu \in M(G : H) $. Then by the definition of $ R_{H} $, we have  
\begin{align*}
R_{H} \mu(\varphi)
&=\mu(\varphi_{\pi_{H}} )\\
&=\sigma (P_{H}(\varphi_{\pi_{H}}))\\
&=\int_{G/H}P_{H}(\varphi_{\pi_{H}})(xH)d\sigma(xH)\\
&=\int_{G/H}\varphi(xH)d\sigma(xH)\\
&=\sigma(\varphi),
\end{align*}
this proves surjectivity.
\begin{remark}\label{Re1}
The operator $R_{H}$ is an extension of the mapping $T_{H}:L^{1}(G)\rightarrow L^{1}(G/H)$ given by $T_{H}f(xH)=\int_{H}f(xh)dh$, for all $x\in G.$
\end{remark}

The next two  Propositions play a central role for making  $M(G/H)$ into a Banach algebra. 
\begin{proposition}\label{pr3}
Let $ H $ be a compact subgroup of a locally compact group $ G $. Then $ R_{H}\mid_{M(G : H)}$, the restriction of $R_{H}$ to $ M(G : H)$, is a bijective mapping and also it is an isometry.                                                                                                                                          
\end{proposition}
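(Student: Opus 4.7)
The plan is to use Proposition \ref{pr2} as the key tool: it already identifies $M(G:H)$ with the image of the map $\sigma \mapsto \sigma \circ P_H$ from $M(G/H)$ into $M(G)$. I will show that $R_H|_{M(G:H)}$ is essentially the inverse of this identification, and then deduce both bijectivity and the isometry property almost for free.

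First I would prove surjectivity. Given $\sigma \in M(G/H)$, set $\mu := \sigma \circ P_H$; by Proposition \ref{pr2} we have $\mu \in M(G:H)$. For any $\varphi \in C_c(G/H)$, using $P_H(\varphi_{\pi_H})=\varphi$, we compute
\begin{align*}
R_H\mu(\varphi) = \mu(\varphi_{\pi_H}) = \sigma(P_H(\varphi_{\pi_H})) = \sigma(\varphi),
\end{align*}
so $R_H\mu = \sigma$. Next, for injectivity, suppose $\mu_1,\mu_2 \in M(G:H)$ with $R_H\mu_1 = R_H\mu_2$. By Proposition \ref{pr2} write $\mu_i = \sigma_i \circ P_H$ with $\sigma_i \in M(G/H)$. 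The surjectivity calculation just performed yields $\sigma_i = R_H\mu_i$, so $\sigma_1 = \sigma_2$ and hence $\mu_1 = \mu_2$.

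For the isometry property, the inequality $\|R_H\mu\| \leq \|\mu\|$ was already established in the discussion preceding this proposition (with the bound $\|R_H\| \leq 1$). For the reverse inequality, write $\mu = \sigma \circ P_H$ where $\sigma = R_H\mu \in M(G/H)$. The bound $\|\sigma \circ P_H\| \leq \|\sigma\|$, obtained in the course of proving Proposition \ref{pr2} from the estimate $|\sigma \circ P_H(f)| \leq \|f\|_\infty\|\sigma\|$, gives
\begin{align*}
\|\mu\| = \|\sigma \circ P_H\| \leq \|\sigma\| = \|R_H\mu\|,
\end{align*}
which combined with the previous inequality yields $\|R_H\mu\| = \|\mu\|$.

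The main obstacle is not in the arithmetic but in choosing the right representation: the whole argument hinges on recognizing that Proposition \ref{pr2} provides a two-sided inverse $\sigma \mapsto \sigma \circ P_H$ to the restriction of $R_H$, so that bijectivity is just a bookkeeping exercise once one invokes that characterization. Verifying that the two constructions are mutually inverse requires the identity $P_H(\varphi_{\pi_H}) = \varphi$, which was noted in the preliminaries. Once this is in hand, the isometry follows immediately by pinching $\|\mu\|$ between $\|R_H\mu\|$ (upper bound, from Proposition \ref{pr2}'s estimate) and $\|R_H\mu\|$ (lower bound, from $\|R_H\| \leq 1$).
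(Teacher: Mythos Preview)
Your proof is correct and follows essentially the same route as the paper: both arguments invoke Proposition~\ref{pr2} to write every $\mu\in M(G:H)$ as $\sigma\circ P_H$, verify via $P_H(\varphi_{\pi_H})=\varphi$ that $R_H$ and $\sigma\mapsto\sigma\circ P_H$ are mutual inverses, and then sandwich $\|\mu\|$ between the bound $\|R_H\|\le 1$ and the estimate $\|\sigma\circ P_H\|\le\|\sigma\|$ from Proposition~\ref{pr2}. The only cosmetic difference is that the paper cites the surjectivity of $R_H$ from the discussion preceding the proposition rather than re-deriving it, and spells out the reverse norm inequality by testing against $\varphi_{\pi_H}$ and using $\|P_H\|\le 1$ explicitly.
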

 \begin{proof}
Since $ R_{H} $ is surjective, it is enough to show that it is injective.
Let $ \mu \in M(G : H) $ and $ R_{H}(\mu)=0 $. Then there exists $\sigma \in M(G/H) $ such that $ \mu=\sigma_{P_{H}}=\sigma \circ P_{H} $ and $ R_{H}(\mu)=0 $ implies that for all $ \varphi \in C_c(G/H) $, $\sigma(\varphi)=\sigma_{P_{H}}(\varphi_{\pi_{H}})=0 $. So that $\mu=0$ and therefore $ R_{H} $ is injective.\\
Let $\sigma_{P_{H}}$ be in $M(G : H)$, then for all $ \varphi $ in $ C_c(G/H) $, on  one hand\\
\begin{align*}
\vert R_{H}(\sigma_{P_{H}})\varphi \vert
=\vert \sigma_{P_{H}}(\varphi_{\pi_{H}}) \vert
\leqslant \Vert \sigma_{P_{H}} \Vert \Vert \varphi_{\pi_{H}} \Vert_{\infty},
\end{align*}
so $\Vert R_{H}\sigma_{P_{H}} \Vert \leqslant \Vert \sigma_{P_{H}} \Vert $
and on the other hand, 
\begin{align*}
\vert \sigma_{P_{H}}(\varphi_{\pi_{H}}) \vert
&=\vert R_{H}\sigma_{P_{H}}(\varphi) \vert\\
&\leqslant \Vert R_{H}\sigma_{P_{H}} \Vert \Vert \varphi \Vert\\
&=\Vert R_{H}\sigma_{P_{H}} \Vert \Vert P_{H}(\varphi_{\pi_{H}}) \Vert\\
&\leqslant \Vert R_{H} \sigma_{P_{H}} \Vert \Vert P_{H}\Vert\Vert \varphi_{\pi_{H}} \Vert\\
&\leqslant \Vert R_{H} \sigma_{P_{H}} \Vert \Vert \varphi_{\pi_{H}} \Vert.
\end{align*}
so $ \Vert \sigma_{P_{H}} \Vert \leqslant \Vert R_{H}\sigma_{P_{H}} \Vert $. 
Hence the proof is complete.
\end{proof}
The remarkable equality $ R_{H}(\delta_{x})=\delta_{xH} $ is obtained by using the following equalities:
\begin{align*}
R_{H}(\delta_{x})(\varphi)
&=\delta_{x}(\varphi_{\pi_{H}})\\
&=\int_{G}{\varphi_{\pi_{H}}(y) d\delta_{x}(y)}\\
&=\varphi_{\pi_{H}}(x)\\
&=\varphi(xH)\\
&=\int_{G/H}{\varphi(yH) d\delta_{xH}(yH)}\\
&=\delta_{xH}(\varphi),
\end{align*}

for all $ x\in G $. Note that for all $ \varphi $ in $ C_c(G/H) $ and $ x\in G $ we get
\begin{align*}
\delta_{xH}(\varphi)
=\int_{G/H}{\varphi(yH) d\delta_{xH}(yH)}
=\varphi(x\eta H)
=\varphi(xH).
\end{align*}
where $ \eta \in H $.\\
Now, we are able to define a convolution on $ M(G/H) $.
\begin{definition}\label{def6}
Let $H$ be a compact subgroup of a locally compact group $G$. The mapping $ \ast:M(G/H) \times M(G/H) \rightarrow M(G/H)$ given by
\begin{align}
\sigma_{1} \ast \sigma_{2} (\varphi):=R_{H}(\sigma_{1_{P_{H}}}\ast \sigma_{2_{P_{H}}})(\varphi) \quad(\varphi \in C_{c}(G/H)), \label{eq24}
\end{align}
is a well-defined convolution on $ M(G/H) $.
\end{definition}
To show that $ \ast $ is well defined, let $ \sigma_{1}, \sigma_{2}, \sigma^{'}_{1}, \sigma^{'}_{2} \in M(G/H)$ and $ ( \sigma_{1}, \sigma_{2})=(\sigma^{'}_{1}, \sigma^{'}_{2})$. Using surjectivity of $R_{H}$, there exists $ \sigma_{1_{P_{H}}}, \sigma_{2_{P_{H}}}, \sigma^{'}_{1_{P_{H}}}, \sigma^{'}_{2_{P_{H}}} \in M(G : H) $ such that
\begin{center}
$ R_{H}(\sigma_{1_{P_{H}}})=\sigma_{1}, R_{H}(\sigma_{2_{P_{H}}})=\sigma_{2}, R_{H}(\sigma^{'}_{1_{P_{H}}})=\sigma^{'}_{1}, R_{H}(\sigma^{'}_{2H})=\sigma^{'}_{2} $.
\end{center}
 Therefore the injectivity of $R_{H}$ implies that $( \sigma_{1_{P_{H}}}, \sigma_{2_{P_{H}}})=(\sigma^{'}_{1_{P_{H}}}, \sigma^{'}_{2_{P_{H}}})$. Thus
 \begin{center}
$ \sigma_{2_{P_{H}}} \ast \sigma_{2_{P_{H}}}=\sigma^{'}_{1_{P_{H}}} \ast \sigma^{'}_{2_{P_{H}}},$ 
 \end{center}
 since the convolution on $ M(G) $ is well-defined. Then $ R_{H}(\sigma_{1_{P_{H}}} \ast \sigma_{2_{P_{H}}})=R_{H}(\sigma^{'}_{1_{P_{H}}} \ast \sigma^{'}_{2_{P_{H}}})$. Finally, by \eqref{eq24}, $ \sigma_{1}\ast \sigma_{2}= \sigma^{'}_{1} \ast \sigma^{'}_{2}$. 
Consequently, convolution $ \ast $ is well-defined.\\
Using Proposition \ref{pr3} and definition \ref{def6} we deduce the following result. 
\begin{corollary}
The bijective mapping $ R_{H}\mid_{M(G : H)} $ in the Proposition \ref{pr3} is an algebraic isometric isomorphism.                                                                                                                                       
\end{corollary}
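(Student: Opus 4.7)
The plan is to observe that Proposition \ref{pr3} has already established that $R_H|_{M(G:H)}$ is a linear isometric bijection between $M(G:H)$ and $M(G/H)$, so the only new content of the corollary is the multiplicative property: for every $\mu_1,\mu_2\in M(G:H)$,
\[
R_H(\mu_1\ast\mu_2)=R_H(\mu_1)\ast R_H(\mu_2),
\]
where the convolution on the left is the one inherited from $M(G)$ and the convolution on the right is the one just introduced in Definition \ref{def6}. My strategy is essentially to unfold this definition and use the bijection from Proposition \ref{pr3} in reverse.

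First I would record that the left-hand side makes sense: by Proposition \ref{pr2}, $M(G:H)$ is a (closed) left ideal of $M(G)$, hence closed under convolution of any two of its elements, so $\mu_1\ast\mu_2\in M(G:H)$ and $R_H(\mu_1\ast\mu_2)\in M(G/H)$ is well defined. Next, set $\sigma_i:=R_H(\mu_i)\in M(G/H)$; by the injectivity half of Proposition \ref{pr3} together with the surjectivity construction in \eqref{eq23}, the unique preimage of $\sigma_i$ inside $M(G:H)$ is exactly $\sigma_i\circ P_H$, i.e.\ the element denoted $\sigma_{i_{P_H}}$ in the paper. Hence $\mu_i=\sigma_{i_{P_H}}$, and Definition \ref{def6} gives
\[
\sigma_1\ast\sigma_2=R_H\bigl(\sigma_{1_{P_H}}\ast\sigma_{2_{P_H}}\bigr)=R_H(\mu_1\ast\mu_2),
\]
which is precisely the required homomorphism identity.

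The main obstacle I expected to face — checking some compatibility between the two convolutions by hand with the quotient integral formula — dissolves entirely because the convolution on $M(G/H)$ was defined through $R_H$ in the first place. The only real content to verify is that the identification $\mu_i\leftrightarrow\sigma_{i_{P_H}}$ is forced by bijectivity, and that convolution does not take us out of $M(G:H)$; both of these are immediate consequences of earlier results in the excerpt (Propositions \ref{pr2} and \ref{pr3}). Combining the multiplicative property above with the isometric bijection from Proposition \ref{pr3} yields the claimed algebraic isometric isomorphism.
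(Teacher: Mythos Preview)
Your proposal is correct and is precisely the argument the paper has in mind: the paper offers no separate proof at all, merely stating that the corollary follows from Proposition \ref{pr3} together with Definition \ref{def6}. You have simply spelled out why---the convolution on $M(G/H)$ was \emph{defined} so that $R_H$ becomes multiplicative on $M(G:H)$, and Proposition \ref{pr3} already supplies the isometric bijection---so your expansion matches the paper's (implicit) reasoning exactly.
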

Now some remarks are in orders.
\begin{remark}\label{re7}
With the notations as above, we have:
\begin{enumerate}
\item[(i)] $(\sigma_{1}\ast \sigma_{2})_{P_{H}}=\sigma_{1_{P_{H}}}\ast \sigma_{2_{P_{H}}}$, because $R_{H}(\sigma_{1_{P_{H}}}\ast \sigma_{2_{P_{H}}})=R_{H}((\sigma_{1}\ast \sigma_{2})_{P_{H}})$ and $ R_{H} $ is one to one on $ M(G : H) $.
\item[(ii)] One can simplify \eqref{eq24} as follows:
\begin{align*}
\sigma_{1} \ast \sigma_{2} (\varphi)
&=R_{H}(\sigma_{1_{P_{H}}}\ast \sigma_{2_{P_{H}}})(\varphi) \\
&=\sigma_{1_{P_{H}}}\ast \sigma_{2_{P_{H}}}(\varphi_{\pi_{H}})\\
&=\int_{G}{\int_{G}{\varphi_{\pi_{H}}(xy) d\sigma_{1_{P_{H}}}(x) d\sigma_{2_{P_{H}}}(y)}}\\
&=\int_{G}{\int_{G}{\varphi(xyH) d\sigma_{1_{P_{H}}}(x) d\sigma_{2_{P_{H}}}(y)}},
\end{align*}
for all $ \varphi \in C_{c}(G/H).$
\item[(iii)] Iet $ \mu\in M(G) $ and $ \sigma \in M(G/H) $, if we define $ \mu \ast \sigma :=R_{H}(\mu\ast \sigma_{P_{H}}) $ then we have
\begin{align*}
\mu \ast \sigma (\varphi)
&=R_{H}(\mu\ast \sigma_{P_{H}})(\varphi)\\
&=\mu \ast \sigma_{P_{H}}(\varphi_{\pi_{H}})\\
&=\int_{G}{\int_{G}{\varphi_{\pi_{H}}(xy) d \mu(x) d\sigma_{P_{H}}(y)}}\\
&=\int_{G}{\int_{G}{\varphi_{\pi_{H}}(xy) d\sigma_{P_{H}}(y)d \mu(x)}}\\
&=\int_{G}{\int_{G}{(L_{x^{-1}}\varphi_{\pi_{H}})(y) d\sigma_{P_{H}}(y) d\mu(x)}}\\
&=\int_{G}{\int_{G/H}{P_{H}(L_{x^{-1}}\varphi_{\pi_{H}})(yH) d\sigma(yH) d\mu(x)}}\\
&=\int_{G}{\int_{G/H}{\int_{H}{L_{x^{-1}}\varphi_{\pi_{H}}(yh)dh d\sigma(yH) d\mu(x)}}}\\
&=\int_{G}{\int_{G/H}{\varphi_{\pi_{H}}(xy)d\sigma(yH) d\mu(x)}}\\
&=\int_{G/H}{\int_{G}{\varphi(xyH)d\mu(x) d\sigma(yH)}},
\end{align*}
for all $ \varphi \in C_{c}(G/H).$
\end{enumerate}
\end{remark}
By using part (iii) of the Remark \ref{re7} it is deduced that $ M(G/H) $ is a left $ M(G) $ module.
In the next main theorem, it is shown that $ M(G/H, *) $ is a  Banach algebra and has an approximate identity. \begin{theorem}\label{th5}
 $ (M(G/H), \ast) $ is a Banach algebra and also it possesses an approximate identity.
\end{theorem}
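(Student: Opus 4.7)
The plan is to transfer the Banach algebra structure from $M(G:H)$ to $M(G/H)$ via the isometric isomorphism $R_{H}|_{M(G:H)}$ of Proposition \ref{pr3}. The space $M(G:H)$ is a closed subalgebra of $M(G)$: closedness is part of Proposition \ref{pr2}, and the first calculation in that proof (applied with two elements of $M(G:H)$) shows that convolutions of right-$H$-invariant measures stay right-$H$-invariant, so $M(G:H)$ is closed under $\ast$. Definition \ref{def6}, combined with Remark \ref{re7}(i), makes $R_{H}|_{M(G:H)}$ an algebra homomorphism, and since it is an isometric linear bijection, the Banach-algebra axioms lift automatically to $M(G/H)$.

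Concretely, for $\sigma_{1},\sigma_{2},\sigma_{3}\in M(G/H)$, associativity follows from
\[
(\sigma_{1}\ast\sigma_{2})\ast\sigma_{3}
= R_{H}\bigl((\sigma_{1_{P_{H}}}\ast\sigma_{2_{P_{H}}})\ast\sigma_{3_{P_{H}}}\bigr)
= R_{H}\bigl(\sigma_{1_{P_{H}}}\ast(\sigma_{2_{P_{H}}}\ast\sigma_{3_{P_{H}}})\bigr)
= \sigma_{1}\ast(\sigma_{2}\ast\sigma_{3})
\]
by Remark \ref{re7}(i) and associativity in $M(G)$, while submultiplicativity reads
\[
\|\sigma_{1}\ast\sigma_{2}\|
=\|\sigma_{1_{P_{H}}}\ast\sigma_{2_{P_{H}}}\|
\leq \|\sigma_{1_{P_{H}}}\|\,\|\sigma_{2_{P_{H}}}\|
=\|\sigma_{1}\|\,\|\sigma_{2}\|
\]
by the same isometry together with submultiplicativity in $M(G)$. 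Bilinearity is immediate from linearity of $R_{H}$ and of $\sigma\mapsto\sigma_{P_{H}}$, and completeness of $M(G/H)$ as a Banach space is standard; together these produce the Banach algebra structure.

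For the approximate identity, I would construct it from a right-$H$-invariant approximate identity in $L^{1}(G)$. Starting from a standard nonnegative $v_{\alpha}\in C_{c}(G)$ with $\|v_{\alpha}\|_{1}=1$ and $\mathrm{supp}(v_{\alpha})$ shrinking to $e$, I would set $u_{\alpha}(x):=\int_{H}v_{\alpha}(xh)\,dh$; compactness of $H$ ensures $u_{\alpha}\in C_{c}(G)\cap L^{1}(G:H)$ with $\|u_{\alpha}\|_{1}\leq 1$. Then $\mu_{u_{\alpha}}\in M(G:H)$ by Proposition \ref{pr22}, and $e_{\alpha}:=R_{H}(\mu_{u_{\alpha}})=T_{H}u_{\alpha}\in L^{1}(G/H)\subseteq M(G/H)$ has $\|e_{\alpha}\|=\|\mu_{u_{\alpha}}\|=\|u_{\alpha}\|_{1}\leq 1$ by the isometry. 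Using Remark \ref{re7}(ii) and Fubini,
\[
e_{\alpha}\ast\sigma(\varphi)=\int_{G}\!\int_{G}\varphi(xyH)\,u_{\alpha}(x)\,dx\,d\sigma_{P_{H}}(y);
\]
for each fixed $y$ the inner integral tends to $\varphi(yH)$ by continuity of $x\mapsto\varphi(xyH)$ at $x=e$ together with the shrinking supports of $u_{\alpha}$, and dominated convergence (with bound $\|\varphi\|_{\infty}$) then gives $e_{\alpha}\ast\sigma(\varphi)\to\sigma(\varphi)$; a symmetric computation yields $\sigma\ast e_{\alpha}(\varphi)\to\sigma(\varphi)$.

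The main obstacle I expect is clarifying the mode of convergence in the approximate-identity step: the argument above only delivers weak-$\ast$ convergence (pointwise on $C_{c}(G/H)$), not norm convergence, which is the best one can hope for, since even the unital algebra $M(G)$ admits no nontrivial norm approximate identity beyond $\{\delta_{e}\}$ itself. This weak-$\ast$ sense is the natural one here and parallels the proof of the corresponding fact for $L^{1}(G/H)$, whose norm approximate identity will recover the stronger conclusion on the ideal $L^{1}(G/H)\subseteq M(G/H)$; all remaining verifications reduce to transporting properties through $R_{H}|_{M(G:H)}$.
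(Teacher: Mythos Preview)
Your Banach-algebra argument coincides with the paper's: both transport associativity and submultiplicativity from the closed subalgebra $M(G:H)\subseteq M(G)$ through the isometric isomorphism $R_{H}|_{M(G:H)}$ of Proposition~\ref{pr3}, using Remark~\ref{re7}(i).

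There is, however, a genuine gap in your approximate-identity step on the \emph{left}. The supports of $u_{\alpha}(x)=\int_{H}v_{\alpha}(xh)\,dh$ do not shrink to $\{e\}$: if $\operatorname{supp}v_{\alpha}\subseteq U_{\alpha}$ then $\operatorname{supp}u_{\alpha}\subseteq U_{\alpha}H$, which shrinks only to $H$. Expanding $u_{\alpha}$, applying Fubini, and substituting $x\mapsto xh^{-1}$ gives
\[
\int_{G}\varphi(xyH)\,u_{\alpha}(x)\,dx=\int_{H}\!\int_{G}\varphi(xh^{-1}yH)\,v_{\alpha}(x)\,dx\,dh\ \longrightarrow\ \int_{H}\varphi(h^{-1}yH)\,dh,
\]
so in fact $e_{\alpha}\ast\sigma\to\delta_{H}\ast\sigma$ weak-$\ast$, not to $\sigma$. (Your right-sided computation \emph{does} go through, because there the stray $h$ is absorbed: $\varphi(xyh^{-1}H)=\varphi(xyH)$; this is consistent with Lemma~\ref{le11}.) The defect is not repairable by a cleverer net: any norm left approximate identity $\{e_{\alpha}\}$ would satisfy $e_{\alpha}=e_{\alpha}\ast\delta_{H}\to\delta_{H}$ by Lemma~\ref{le11}, forcing $\delta_{H}\ast\sigma=\sigma$ for every $\sigma$ and contradicting the paper's own subsequent proposition that $\delta_{H}$ is a two-sided identity only when $H$ is normal. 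The paper's argument for norm left convergence has a parallel flaw: it invokes Proposition~\ref{pr22} to write an arbitrary $\sigma_{P_{H}}\in M(G:H)$ as $\mu_{\psi_{\pi_{H}}}$, but that proposition, as proved, only treats measures already assumed of the form $\mu_{f}$; for instance $(\delta_{H})_{P_{H}}$, which is normalised Haar measure on $H$ sitting inside $M(G)$, lies in $M(G:H)$ yet is singular whenever $H$ is not open. The honest statement is that $M(G/H)$ carries the right identity $\delta_{H}$ (hence a right approximate identity trivially), while for non-normal $H$ it admits no left approximate identity in norm.
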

\begin{proof}
It is well known that $ M(G/H) $ endowed with the total variation norm is a Banach space (See e. g. \cite{8}, P. 233).
The fact that convolution on $M(G/H)$ is  associative follows by applying $ (ii) $ of Remark \ref{re7} twice and associativity of $M(G)$.
Let $ \sigma_{1}, \sigma_{2}$ be in $M(G/H)$. Then by using surjectivity of $R_{H}$, there exists $ \sigma_{1_{P_{H}}}$ and, $\sigma_{2_{P_{H}}}$ in $M_{H}(G)$ such that $ R_{H}(\sigma_{1_{P_{H}}})=\sigma_{1}$ and $R_{H}(\sigma_{2_{P_{H}}})=\sigma_{2}$. Now the definition \eqref{def6} and the fact that $M(G : H)$ is an normed algebra imply that:
\begin{align*}
\Vert \sigma_{1} \ast \sigma_{2} \Vert
=\Vert \sigma_{1_{P_{H}}} \ast \sigma_{2_{P_{H}}}\Vert
\leqslant \Vert \sigma_{1_{P_{H}}}\Vert \Vert \sigma_{2_{P_{H}}}\Vert
\leqslant \Vert R_{H}\sigma_{1_{P_{H}}} \Vert \Vert  R_{H}\sigma_{2_{P_{H}}}\Vert
=\Vert \sigma_{1}\Vert \Vert \sigma_{2}\Vert.
\end{align*}
Note that $ R_{H} $ is an isometry. Thus $ (M(G/H), \ast) $ is a normed Banach algebra.\\
To introduce an approximate identity, let $ \lbrace \varphi_{\alpha}\rbrace_{\alpha\in\Lambda} $ be an approximate identity for the Banach algebra $ L^{1}(G/H) $, see \cite{3}.  Put $ \sigma_{\alpha}:=R_{H}(\mu_{(\varphi_{\alpha})_{\pi_{H }}} $, for all $ \alpha \in \Lambda $ where $ \mu $ is the left Haar measure on $ G $. Then by surjectivity of $ R_{H} $, for any $ \sigma $ in $ M(G/H) $ there exists $ \sigma_{P_{H}}\in M(G : H) $ such that $ R_{H}(\sigma_{P_{H}})=\sigma $. Hence we have
\begin{align*}
\Vert \sigma_{\alpha} \ast \sigma -\sigma\Vert
=\Vert R_{H}(\mu_{(\varphi_{\alpha})_{\pi_{H }}}) \ast R_{H}(\sigma_{P_{H}}) - R_{H}(\sigma_{P_{H}})\Vert
=\Vert \mu_{(\varphi_{\alpha})_{\pi_{H }}} \ast \sigma_{P_{H}} - \sigma_{P_{H}} \Vert,
\end{align*}
but by Proposition \ref{pr22} there exists $ \psi_{\pi_{H}} \in C_{c}(G : H)  $ such that $ \sigma_{P_{H}}= \mu_{\psi_{\pi_{H}}} $ and also it can be seen by direct computation that $ \mu_{f}*\mu_{g}-\mu_{h}=\mu_{f \ast g - h}$, for all $ f, g $ and $ h $ in $ C_{c}(G), $   so
\begin{align*}
\Vert \sigma_{\alpha} \ast \sigma -\sigma\Vert
=\Vert \mu_{(\varphi_{\alpha})_{\pi_{H }}} \ast \mu_{\psi_{\pi_{H}}} - \mu_{\psi_{\pi_{H}}} \Vert
=\Vert \mu_{(\varphi_{\alpha})_{\pi_{H }} \ast \psi_{\pi_{H}}-\psi_{\pi_{H}}} \Vert.
\end{align*}
On the othere hand, the embedding of $ L^{1}(G) $ into $ M(G) $ is isometric, therefore
\begin{align*}
\Vert \sigma_{\alpha} \ast \sigma - \sigma\Vert
&=\Vert (\varphi_{\alpha})_{\pi_{H }} \ast \psi_{\pi_{H}}-\psi_{\pi_{H}} \Vert\\
&=\Vert P_{H}\left( (\varphi_{\alpha})_{\pi_{H }} \ast \psi_{\pi_{H}}-\psi_{\pi_{H}}\right)   \Vert\\
&=\Vert P_{H}\left( (\varphi_{\alpha}) _ {\pi_{H }} \ast \psi_{\pi_{H}}\right) -P_{H}\left(\psi_{\pi_{H}}\right)   \Vert\\
&=\Vert P_{H}\left( (\varphi_{\alpha}) _ {\pi_{H }}\right)  \ast P_{H} \left( \psi_{\pi_{H}}\right) -P_{H}\left(\psi_{\pi_{H}}\right)   \Vert\\
&=\Vert \varphi_{\alpha} \ast \psi - \psi   \Vert, 
\end{align*}
Since $ \lbrace \varphi_{\alpha}\rbrace_{\alpha\in\Lambda} $ is an approximate identity, $ \Vert \varphi_{\alpha} \ast \psi - \psi   \Vert $  tends to $ 0 $ as $ \alpha \rightarrow \infty $. Note that in the two last equalities,  $ P_{H} $ is an isometry from $ C_{c}(G : H) $ onto $ C_{c}(G/H) $.\\
This implies that $ \Vert \sigma_{\alpha} \ast \sigma - \sigma\Vert $  goes to $ 0 $ when $ \alpha \rightarrow \infty $.
\end{proof}
In the sequel, consider $ \delta_{e} $ as the unit element of the unital Banach algebra $ M(G) $. If we define the point mass measure $ \delta_{H}:=R_{H}(\delta_{e}) $, then for all $ \varphi $ in $ C_c(G/H) $, by the definition of $ R_{H} $ we have
\begin{align}
\delta_{H}(\varphi)=R_{H}(\delta_{e})(\varphi)=\delta_{e}(\varphi_{\pi_{H}})=\int_{G}{\varphi_{\pi_{H}}(x)d\delta_{e}(x)}=\varphi_{\pi_{H}}(e)=\varphi(H).\label{eq25}
\end{align}
Note that for all $ \varphi $ in $ C_c(G/H) $ we have
\begin{center}
$\delta_{H}(\varphi)=\int_{G/H}{\varphi(xH)d\delta_{H}(xH)}=\varphi(H).$
\end{center}
\begin{lemma}\label{le11}
Let $H$ be a compact subgroup of a locally compact group $G$. $ \delta_{H} $ is a right multiplicative identity in the algebra $ M(G/H) $.
\end{lemma}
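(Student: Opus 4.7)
The plan is to unpack the convolution via Remark \ref{re7}(ii) and exploit the fact that the lift of $\delta_H$ into $M(G:H)$ is concentrated on the compact subgroup $H$. First, for a fixed $\sigma \in M(G/H)$, I will invoke Propositions \ref{pr2} and \ref{pr3} to write $\sigma = R_H(\sigma_{P_H})$ with $\sigma_{P_H} = \sigma \circ P_H \in M(G:H)$ the unique preimage of $\sigma$ under $R_H\mid_{M(G:H)}$.

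The step I expect to carry the actual content is identifying $(\delta_H)_{P_H}$. The naive guess $\delta_e$ satisfies $R_H(\delta_e) = \delta_H$, but $\delta_e$ is generally \emph{not} right-$H$-invariant and so does not lie in $M(G:H)$; hence it is the wrong lift, and this subtlety is the main obstacle. By Proposition \ref{pr2} the correct element of $M(G:H)$ is $\delta_H \circ P_H$, and evaluation on $g \in C_c(G)$ gives
\[
(\delta_H)_{P_H}(g) \;=\; \delta_H(P_H g) \;=\; (P_H g)(H) \;=\; \int_H g(h)\,dh,
\]
so $(\delta_H)_{P_H}$ is normalized Haar measure on $H$, regarded as a bounded Radon measure on $G$ supported in $H$. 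Once this lift is pinned down, the remainder is a Fubini calculation together with the definition of $R_H$.

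With this identification in hand, I will apply Remark \ref{re7}(ii) to obtain, for each $\varphi \in C_c(G/H)$,
\[
\sigma \ast \delta_H(\varphi) \;=\; \int_G\!\int_G \varphi(xyH)\,d\sigma_{P_H}(x)\,d(\delta_H)_{P_H}(y) \;=\; \int_G\!\int_H \varphi(xhH)\,dh\,d\sigma_{P_H}(x).
\]
Because $hH = H$ for $h \in H$, the point $xhH$ reduces to $xH$, and the inner integral collapses against normalized Haar measure on $H$ to give $\varphi(xH)$. The outer integral then reads
\[
\int_G \varphi(xH)\,d\sigma_{P_H}(x) \;=\; \sigma_{P_H}(\varphi_{\pi_H}) \;=\; R_H(\sigma_{P_H})(\varphi) \;=\; \sigma(\varphi),
\]
so $\sigma \ast \delta_H = \sigma$ for every $\sigma \in M(G/H)$, establishing that $\delta_H$ is a right multiplicative identity.
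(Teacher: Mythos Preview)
Your proof is correct and follows essentially the same approach as the paper: both lift to $M(G:H)$ via Proposition~\ref{pr2}, use Remark~\ref{re7}(ii), and exploit that integrating against $(\delta_H)_{P_H}$ amounts to integrating over $H$ with normalized Haar measure, after which $xhH = xH$ collapses the inner integral. Your version is slightly more streamlined because you identify $(\delta_H)_{P_H}$ explicitly as Haar measure on $H$ at the outset, whereas the paper unwinds this same fact step by step through the chain $\int_G f\,d(\delta_H)_{P_H} = \delta_H(P_H f) = (P_H f)(H) = \int_H f(h)\,dh$ inside the main computation.
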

\begin{proof}
Suppose that $ \varphi $ is in $ C_c(G/H) $ and $ \sigma \in M(G/H) $. Then we have
\begin{align*}
\sigma \ast \delta_{H}(\varphi)
&=R_{H}(\sigma_{P_{H}} \ast (\delta_{H})_{P_{H}})(\varphi)\\
&=(\sigma_{P_{H}} \ast (\delta_{H})_{P_{H}})(\varphi_{\pi_{H}})\\
&=\int_{G}{\int_{G}{\varphi_{\pi_{H}}(st) d\sigma_{P_{H}}(s) d(\delta_{H})_{P_{H}}}}\\
&=\int_{G}{\int_{G}{(L_{s^{-1}}\varphi_{\pi_{H}})(t) d(\delta_{H})_{P_{H}}(t) d\sigma_{P_{H}}(s)}}\\
&=\int_{G}{\int_{G/H}{P_{H}(L_{s^{-1}}\varphi_{\pi_{H}})(xH) d(\delta_{H})(xH) d\sigma_{P_{H}}(s)}}\\
&=\int_{G}{\int_{G/H}{\int_{H}{L_{s^{-1}}\varphi_{\pi_{H}}(xh)dh d(\delta_{H})(xH) d\sigma_{P_{H}}(s)}}}\\
&=\int_{G}{\int_{H}{L_{s^{-1}}\varphi_{\pi_{H}}(\eta h)dh d(\delta_{H})(xH) d\sigma_{P_{H}}(s)}},
\end{align*}
for some $ \eta \in H $. Therefore, because of $ dh $ is invariant, we have
\begin{align*}
\sigma \ast \delta_{H}(\varphi)
&=\int_{G}{\int_{H}{L_{s^{-1}}\varphi_{\pi_{H}}(\eta h)dh d\sigma_{P_{H}}(s)}}\\
&=\int_{G}{\int_{H}{\varphi_{\pi_{H}}(sh)dhd\sigma_{P_{H}}(s)}}\\
&=\int_{G}{\varphi_{\pi_{H}}(s)d\sigma_{P_{H}}(s)}\\
&=\int_{G/H}{\int_{H}{\varphi_{\pi_{H}}(xh)dh d\sigma(xH)}}\\
&=\int_{G/H}{\varphi(xH)d\sigma(xH)}\\
&=\sigma(\varphi).
\end{align*}
Thus, for all $ \sigma $ in $ M(G/H) $ we have $ \sigma \ast \delta_{H}=\sigma $. 
\end{proof}
\begin{corollary}\label{co17}
Let $H$ be a compact subgroup of a locally compact group $G$. If $ \sigma $ is a two-sided identity in the algebra $ M(G/H) $, then $ \sigma=\delta_{H} $.
\end{corollary}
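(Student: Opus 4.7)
The plan is to combine the two-sided identity hypothesis with Lemma~\ref{le11}, which asserts that $\delta_H$ is a right identity in $M(G/H)$. This is the standard uniqueness argument for identity elements, and here it applies essentially verbatim: the only subtlety is being careful about which factor plays which role.

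First I would spell out what it means for $\sigma$ to be a two-sided identity, namely that $\sigma\ast \tau=\tau$ and $\tau\ast\sigma=\tau$ for every $\tau\in M(G/H)$. Taking $\tau=\delta_H$ in the first equation yields
\begin{equation*}
\sigma\ast\delta_H=\delta_H.
\end{equation*}
On the other hand, Lemma~\ref{le11} tells us that $\delta_H$ itself is a right identity, so applied to $\sigma$ it gives
\begin{equation*}
\sigma\ast\delta_H=\sigma.
\end{equation*}
Comparing the two identities forces $\sigma=\delta_H$, which is exactly the conclusion.

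There is essentially no obstacle here: the Banach algebra structure and associativity established in Theorem~\ref{th5} are not even needed, only the existence of one fixed right identity $\delta_H$ provided by Lemma~\ref{le11}. I would keep the proof to a few lines, explicitly invoking Lemma~\ref{le11} at the second equality so the reader sees where each piece is used, and then concluding by transitivity of equality. If desired, one could phrase it as showing that any two right identities in a semigroup must agree with any left identity, but in such a short proof it is cleaner just to evaluate $\sigma\ast\delta_H$ in the two possible ways.
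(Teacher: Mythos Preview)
Your proposal is correct and mirrors the paper's own argument: both compute $\sigma\ast\delta_H$ in two ways, once using the hypothesis that $\sigma$ is a (left) identity to get $\delta_H$, and once using Lemma~\ref{le11} to get $\sigma$. The paper compresses this into the chain $\delta_H=\delta_H\ast\sigma=\sigma\ast\delta_H=\sigma$, but the content is identical.
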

\begin{proof}
Since $ \delta_{H}=\delta_{H} \ast \sigma=\sigma \ast \delta_{H}=\sigma $, the last equality is satisfied by considering the Lemma \ref{le11}.
\end{proof}
Generaly, $ \delta_{H} $ is not a left identity in the algebra $ M(G/H) $. Hence $ (M(G/H), *) $ failes to be a unital Banach algebra.
\begin{corollary}\label
The Banach algebra $ (M(G/H), *) $ is not an involutive algebra.
\end{corollary}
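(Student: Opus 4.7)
The plan is to argue by contradiction and derive from a hypothetical involution $\sharp$ that $\delta_H$ would have to be a two-sided identity for $\ast$, in direct conflict with the remark immediately preceding this corollary.

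First I would combine the right-identity property of $\delta_H$ from Lemma~\ref{le11} with the antimultiplicativity of any would-be involution. Applying $\sharp$ to the identity $\sigma\ast\delta_H=\sigma$ yields $\delta_H^\sharp\ast\sigma^\sharp=\sigma^\sharp$ for every $\sigma\in M(G/H)$. Since an involution is necessarily a bijection (being its own inverse), $\sigma^\sharp$ ranges over all of $M(G/H)$ as $\sigma$ does, so $\delta_H^\sharp$ is a left identity for $\ast$.

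Next I would force $\delta_H^\sharp=\delta_H$ by evaluating $\delta_H^\sharp\ast\delta_H$ in two ways: via the left-identity property of $\delta_H^\sharp$ the product equals $\delta_H$, while via the right-identity property of $\delta_H$ it equals $\delta_H^\sharp$. Hence $\delta_H$ would itself be a two-sided identity, consistent with Corollary~\ref{co17} (which only asserts uniqueness of any two-sided identity) but in \emph{direct} contradiction with the remark stating that $\delta_H$ is generally not a left identity.

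The main obstacle is to make that remark rigorous by exhibiting a genuine situation in which $\delta_H$ fails to be a left identity. Using Remark~\ref{re7}(ii), the left action of $\delta_H$ unwinds to
\[
\delta_H\ast\sigma(\varphi)=\int_G\!\int_H\varphi(hyH)\,dh\,d\sigma_{P_H}(y),
\]
which coincides with $\sigma(\varphi)=\int_G\varphi(yH)\,d\sigma_{P_H}(y)$ for every $\sigma$ and $\varphi$ if and only if $\int_H\varphi(hyH)\,dh=\varphi(yH)$ for all $y\in G$ and all $\varphi\in C_c(G/H)$; a standard Urysohn separation then forces $hyH=yH$ for every $h\in H$, that is, normality of $H$ in $G$. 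Since the standing hypothesis only requires $H$ to be compact, choosing any non-normal compact $H\le G$ (for instance $H=O(2)$ inside $G=E(2)$) together with a point mass $\sigma=\delta_{xH}$ and a $\varphi\in C_c(G/H)$ separating $xH$ from $hxH$ produces the required failure and closes the argument.
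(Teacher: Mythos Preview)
Your argument is correct and follows essentially the same route as the paper: assume an involution exists, apply antimultiplicativity to $\sigma\ast\delta_H=\sigma$ to produce a left identity, identify it with $\delta_H$, and contradict the remark that $\delta_H$ is not a left identity. You are more careful than the paper on two points---you explicitly justify $\delta_H^{\sharp}=\delta_H$ via the two-way evaluation of $\delta_H^{\sharp}\ast\delta_H$, and you supply the Urysohn/non-normality argument that the paper only states as a remark here and proves in the proposition immediately following the corollary.
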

\begin{proof}
Since $ \sigma * \delta_{H}=\sigma $ for all $ \sigma \in M(G/H) $, it follows that $ \left(\sigma * (\delta_{H})^{*} \right) ^{*}=\delta_{H} * \sigma^{*} =\sigma^{*} $. Thus $ \delta_{H}*\sigma=\sigma $, for all $ \sigma \in M(G/H) $. This implies that $ \delta_{H} $ is a left identity, a contradiction.
\end{proof}
\begin{proposition}
Let $H$ be a compact subgroup of a locally compact group $ G $. 
\begin{enumerate}
\item[(i)] $ \delta_{xH} \ast \delta_{yH}=\delta_{xyH}$ is satisfied for all $ x, y \in G $ if and only if $ H $ is normal.
\item[(ii)] $ \delta_{H} $ is an identity for the Banach algebra $ (M(G/H), *) $ if and only if $ H $ is normal.
\end{enumerate}
\end{proposition}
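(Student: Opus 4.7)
The plan is to reduce both parts to one explicit computation of the convolution of two point masses. First I would determine the preimage of $\delta_{xH}$ under $R_H\!\mid_{M(G:H)}$: by Proposition \ref{pr2}, $(\delta_{xH})_{P_H}$ is characterized by $(\delta_{xH})_{P_H}(f)=\delta_{xH}(P_H f)=\int_H f(xh)\,dh$ for $f\in C_c(G)$, so it is the push-forward of normalized Haar measure on $H$ under $h\mapsto xh$. Substituting this twice into Remark \ref{re7}(ii), and using $h_2H=H$ together with $\int_H dh=1$, one obtains the key formula
\[
\delta_{xH}\ast \delta_{yH}(\varphi)=\int_H \varphi(xhyH)\,dh\qquad(\varphi\in C_c(G/H)).
\]

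For part (i), if $H$ is normal then for each $h\in H$ one has $hy\in yH$, so $xhyH=xyH$ and the integral collapses to $\varphi(xyH)=\delta_{xyH}(\varphi)$. Conversely, suppose $\delta_{xH}\ast\delta_{yH}=\delta_{xyH}$ for all $x,y\in G$. Taking $x=e$ yields the equation of probability measures
\[
\int_H \delta_{hyH}\,dh=\delta_{yH}\qquad(y\in G).
\]
Since the right side is a point mass, the support of the left side must be the singleton $\{yH\}$, i.e.\ $hyH=yH$ for every $h\in H$. Equivalently $y^{-1}Hy\subseteq H$ for all $y\in G$, and replacing $y$ by $y^{-1}$ gives $H=y^{-1}Hy$, so $H$ is normal.

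For part (ii), the forward implication is essentially the same computation in higher generality: assuming $H$ normal, apply Remark \ref{re7}(ii) with $\sigma_1=\delta_H$ and $\sigma_2=\sigma$. The inner $s$-integral is against $(\delta_H)_{P_H}$, the Haar measure on $H$, giving
\[
\delta_H\ast\sigma(\varphi)=\int_G\!\int_H \varphi(htH)\,dh\,d\sigma_{P_H}(t)=\int_G \varphi_{\pi_H}(t)\,d\sigma_{P_H}(t)=\sigma(P_H(\varphi_{\pi_H}))=\sigma(\varphi),
\]
where normality again collapses $\varphi(htH)=\varphi(tH)$. Combined with Lemma \ref{le11}, this makes $\delta_H$ a two-sided identity. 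For the converse, if $\delta_H$ is an identity then in particular $\delta_H\ast\delta_{yH}=\delta_{yH}$ for every $y\in G$, which by the key formula forces $\int_H\varphi(hyH)\,dh=\varphi(yH)$ for all $\varphi\in C_c(G/H)$, and the same support argument as in (i) gives $H$ normal.

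The only genuinely delicate step is the support/separation argument that promotes the integral identity $\int_H\varphi(hyH)\,dh=\varphi(yH)$ to the pointwise conclusion $hyH=yH$; this is handled either by noting that a probability measure equal to a point mass must have singleton support, or by invoking Urysohn's lemma on $G/H$ (which is Hausdorff since $H$ is closed) to separate $yH$ from any putative distinct $hyH$. Everything else is bookkeeping built on the explicit formula and the isometry $R_H\!\mid_{M(G:H)}$ of Proposition \ref{pr3}.
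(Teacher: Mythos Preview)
Your proof is correct and follows broadly the same strategy as the paper: lift to $M(G:H)$ via Proposition~\ref{pr2}/\ref{pr3}, compute through Remark~\ref{re7}(ii), and for the ``only if'' directions use a Urysohn-type separation argument on the Hausdorff space $G/H$.

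The organizational difference is that you isolate upfront the single formula
\[
\delta_{xH}\ast\delta_{yH}(\varphi)=\int_H\varphi(xhyH)\,dh,
\]
and then read off every implication from it, whereas the paper performs separate computations. This buys you a cleaner and more complete treatment of the ``if'' half of (i): you verify $\delta_{xH}\ast\delta_{yH}=\delta_{xyH}$ directly for arbitrary $x,y$, while the paper only checks that $\delta_H$ is a left identity (the special case $x=e$) and leaves the general point-mass identity implicit. Conversely, the paper's ``only if'' arguments invoke Urysohn's lemma explicitly to manufacture a $\varphi$ with $\varphi(xH)=1$, $\varphi(\eta xH)=0$; your support argument (a probability measure equal to a Dirac mass has singleton support, hence $hyH=yH$ for all $h\in H$) is the same idea in slightly more measure-theoretic language, and is equally valid. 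Your final remark that this is the only genuinely delicate step is accurate.
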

\begin{proof}
(i) Let the equality $ \delta_{xH} \ast \delta_{yH}=\delta_{xyH}$ hold for all $ x, y \in G $. If $ H $ is not normal subgroup of $ G $, then by Urysohn's Lemma there exists $ \varphi $ in $ C_c(G/H) $ which $ \varphi(xH)=1 $ and $ \varphi(\eta xH)=0 $. Hence, by the hypothesis, $ \delta_{\eta H} \ast \delta_{xH}(\varphi)=\delta_{xH}(\varphi)=\varphi(xH)=1$. On the other hand $ \delta_{\eta H} \ast \delta_{xH}(\varphi)=\delta_{\eta xH}(\varphi)=\varphi(\eta xH)=0 $ and this is impossible. Thus $ H $ is normal.\\
For the converse, let $ H $ be a normal subgroup of $ G $. Considering the Lemma \ref{le11} and the Corollary \ref{co17}, it is enough to show that $ \delta_{H} $ is a left multiplicative identity in $ M(G/H) $. Let $ \varphi $ be in $ C_c(G/H) $ and $ \sigma $ be an arbitrary element in $ M(G/H) $. Then
\begin{align*}
(\delta_{H} \ast \sigma)(\varphi)
&=R_{H}((\delta_{H})_{P_{H}} \ast \sigma_{P_{H}})(\varphi)\\
&=((\delta_{H})_{P_{H}} \ast \sigma_{P_{H}})(\varphi_{\pi_{H}})\\
&=\int_{G}{\int_{G}{\varphi_{\pi_{H}}(st) d(\delta_{H})_{P_{H}}(s) d\sigma_{P_{H}}(t)}}\\
&=\int_{G}{\int_{G}{(R_{t}\varphi_{\pi_{H}})(s) d(\delta_{H})_{P_{H}}(s) d\sigma_{P_{H}}(t)}}\\
&=\int_{G}{\int_{G/H}{P_{H}(R_{t}\varphi_{\pi_{H}})(xH) d(\delta_{H})(xH) d\sigma_{P_{H}}(t)}}\\
&=\int_{G}{\int_{G/H}{\int_{H}{R_{t}\varphi_{\pi_{H}}(xh)dh d(\delta_{H})(xH) d\sigma_{P_{H}}(t)}}}\\
&=\int_{G}{\int_{H}{R_{t}\varphi_{\pi_{H}}(\eta h)dh d(\delta_{H})(xH) d\sigma_{P_{H}}(t)}},
\end{align*}
for some $ \eta \in H $. Since $ H $ is normal and $ dh $ is invariant, we have
\begin{align*}
(\delta_{H} \ast \sigma)(\varphi)
&=\int_{G}{\int_{H}{R_{t}\varphi_{\pi_{H}}(h)dhd\sigma_{P_{H}}(t)}}\\
&=\int_{G}{\int_{H}{\varphi_{\pi_{H}}(ht)dhd\sigma_{P_{H}}(t)}}\\
&=\int_{G}{\int_{H}{\varphi(htH)dhd\sigma_{P_{H}}(t)}}\\
&=\int_{G}{\int_{H}{\varphi(hHt)dhd\sigma_{P_{H}}(t)}}\\
&=\int_{G}{\int_{H}{\varphi(tH)dhd\sigma_{P_{H}}(t)}}\\
&=\int_{G}{\varphi_{\pi_{H}}(t)d\sigma_{P_{H}}(t)}\\
&=\int_{G}{\int_{H}{\varphi(tH)dhd\sigma_{P_{H}}(t)}}\\
&=\int_{G/H}{\int_{H}{\varphi_{\pi_{H}}(th)dh d\sigma(tH) d\sigma_{P_{H}}(t)}}\\
&=\int_{G/H}{\varphi(xH)d\sigma(xH)}\\
&=\sigma(\varphi).
\end{align*}
This implies that $ \delta_{H} \ast \sigma=\sigma $, that is $ \delta_{H} $ is an identity for $ M(G/H) $.\\ 
(ii) Assume that $ H $ is a normal subgroup  of $ G $, the proof to show that $ M(G/H) $ has an identity is the same as the proof of the converse part in $ (i). $\\ 
Conversely, suppose that $ \sigma $ is the two-sided identity in $ M(G/H) $ and assume that $ H $ is not normal. Then there exists some $ \eta \in H $ and $ x \in G $ such that $ \eta xH\neq xH $. Now take a $ \varphi $ in $ C_c(G/H) $ with $ \varphi(xH)=1 $ and $ \varphi(\eta xH)=0 $, this is possible by Urysohn's Lemma. By $ (i) $ above $ \delta_{\eta H} \ast \delta_{xH}(\varphi)=\delta_{xH}(\varphi)=\varphi(xH)=1,$ and on the other hand $ \delta_{\eta H} \ast \delta_{xH}(\varphi)=\delta_{\eta xH}(\varphi)=\varphi(\eta xH)=0 $, a contradiction.  
\end{proof}
\begin{proposition}\label{pr7}
Let $H$ be a compact subgroup of a locally compact group $G$ and also let $  \lambda$ be a strongly quasi-invariant measure on $ G/H $. Fix $ \varphi $ in $ L^1(G/H, \lambda) $. Then $ \psi \mapsto \int_{G/H}{\psi(xH)\varphi(xH) d\lambda(xH)} $, $ \psi \in C_{c}(G/H) $, defines a bounded measure on $ G/H $. Denoting this measure by $ \lambda_{\varphi} $, the mapping  $ \varphi \mapsto \lambda_{\varphi} $ is an isometric injection on $ L^1(G/H, \lambda) $ into $ M(G/H) $.
\end{proposition}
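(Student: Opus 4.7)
The plan is to recognize the functional $T_\varphi(\psi)=\int_{G/H}\psi\,\varphi\,d\lambda$ as a bounded linear functional on $C_c(G/H)$ with the sup norm, invoke the Riesz--Markov representation theorem for the locally compact Hausdorff space $G/H$ to produce the measure $\lambda_\varphi\in M(G/H)$, and then check linearity, isometry and injectivity in turn.

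First I would verify that $T_\varphi$ makes sense and is bounded: for any $\psi\in C_c(G/H)$,
\[
|T_\varphi(\psi)|=\Bigl|\int_{G/H}\psi(xH)\varphi(xH)\,d\lambda(xH)\Bigr|\le\|\psi\|_\infty\int_{G/H}|\varphi|\,d\lambda=\|\psi\|_\infty\|\varphi\|_1,
\]
so $T_\varphi$ is a bounded linear functional on $C_c(G/H)$ of norm at most $\|\varphi\|_1$. By the Riesz representation theorem there is a unique $\lambda_\varphi\in M(G/H)$ with $T_\varphi(\psi)=\int\psi\,d\lambda_\varphi$ for every $\psi\in C_c(G/H)$, and $\|\lambda_\varphi\|\le\|\varphi\|_1$. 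Linearity of $\varphi\mapsto\lambda_\varphi$ is immediate from linearity of the integral in $\varphi$.

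The crux is the reverse inequality $\|\lambda_\varphi\|\ge\|\varphi\|_1$, which together with the previous step gives the isometry and, since a non-zero $\varphi$ cannot yield $\lambda_\varphi=0$, the injectivity. For this I would exhibit test functions $\psi\in C_c(G/H)$ with $\|\psi\|_\infty\le 1$ for which $T_\varphi(\psi)$ is arbitrarily close to $\|\varphi\|_1$. The standard device is to choose a measurable $u$ with $|u|=1$ and $u\,\varphi=|\varphi|$ (for instance $u=\overline{\operatorname{sgn}\varphi}$), write $\|\varphi\|_1=\int u\,\varphi\,d\lambda$, and then use density of $C_c(G/H)$ in $L^1(G/H,\lambda)$ (a consequence of $\lambda$ being a Radon measure) together with a Lusin-type approximation to replace $u$ by some $\psi_n\in C_c(G/H)$ with $\|\psi_n\|_\infty\le 1$ and $\int\psi_n\varphi\,d\lambda\to\|\varphi\|_1$. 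Then $\|\lambda_\varphi\|\ge\sup_n|T_\varphi(\psi_n)|=\|\varphi\|_1$.

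The step I expect to be most delicate is the approximation of the unimodular function $u$ by continuous compactly supported $\psi_n$ of norm one in an $L^1(|\varphi|d\lambda)$ sense, because it must be done within the Radon framework on $G/H$; once this Lusin/density approximation is in hand, everything else is a bookkeeping check. Finally, isometry forces $\lambda_\varphi=0\Rightarrow\|\varphi\|_1=0\Rightarrow\varphi=0$ in $L^1(G/H,\lambda)$, giving the asserted injection.
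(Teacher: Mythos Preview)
Your proof is correct and follows the same overall strategy as the paper: establish $\|\lambda_\varphi\|\le\|\varphi\|_1$ trivially, then obtain the reverse inequality by testing against approximations to $\overline{\operatorname{sgn}\varphi}$. The only difference is in how that approximation is carried out. You invoke Lusin's theorem directly on the finite Radon measure $|\varphi|\,d\lambda$ to approximate the unimodular function $u$ by $\psi_n\in C_c(G/H)$ with $\|\psi_n\|_\infty\le 1$, which handles a general $\varphi\in L^1(G/H)$ in one stroke. The paper instead first treats $\varphi\in C_c(G/H)$ with the explicit test functions
\[
\varphi_n(xH)=\Bigl(\tfrac{|\varphi(xH)|}{\|\varphi\|_\infty}\Bigr)^{1/n}\operatorname{sgn}\bigl(\overline{\varphi(xH)}\bigr),
\]
which lie in $C_c(G/H)$ automatically and satisfy $\varphi_n\varphi\uparrow|\varphi|$, so monotone convergence gives $\|\varphi\|_1\le\|\lambda_\varphi\|$ without any appeal to Lusin; the general $L^1$ case is then obtained by density. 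Your route is slightly heavier in machinery but avoids the two-stage argument; the paper's route is more elementary for continuous $\varphi$ but defers the real work to the density step. Either way the content is the same.
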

\begin{proof}
Let $ \varphi $ be a non zero element of $ C_{c}(G/H) $. For all $ xH $ in $ G/H $, set $ \varphi_{n}(xH):=(\vert \varphi(xH) \vert / \Vert \varphi \Vert _{\infty})^{1/n}sgn(\overline{\varphi(xH)}) $ for all $ n\geqslant1 $. Clearly, $ \varphi_{n} \varphi \geqslant 0 $, $ \Vert \varphi_{n} \Vert_{\infty}\leqslant 1 $, and also $ \varphi_{n} \varphi \uparrow \vert \varphi \vert $ as $ n $ goes to $ \infty $. Hence by using the monotone convergence Theorem we have
\[ \int_{G/H} {\vert \varphi \vert d\lambda(xH)}=\lim \int_{G/H} {\varphi_{n} \varphi d\lambda(xH)} \leqslant \int_{G/H} {\Vert \varphi_{n}\Vert_{\infty} \varphi d\lambda(xH)} \leqslant \int_{G/H} { \varphi d\lambda(xH)}=\Vert \lambda_{\varphi} \Vert.\]
The inverse is obvious. Therefore, $ \Vert \lambda_{\varphi} \Vert=\Vert \varphi \Vert_{1} $ for all $ \varphi $ in $ C_{c}(G/H).$
The general case then follows by approximation for all $ \varphi $ in $ L^{1}(G/H) .$
\end{proof}
\begin{remark}\label{re8}The asserted inclusion in Proposition \ref{pr7} is reduced to an injection of the algebra $ L^{1}(G) $ into $ M(G) $ in the case of $ H=\lbrace e \rbrace $.        
\end{remark}
The relation between some of the induced measures on $ G/H $ and $ G $, in Proposition \ref{pr7} and Remark \ref{re8} respectively, has been proved in the following Lemma. 
\begin{lemma}\label{le9}
Fix $ \mu $ as a left Haar measure on $ G $ and $ \lambda $ as a strongly quasi-invariant measure on $ G/H $ with associated rho-function $ \rho $, then for all $ \varphi \in C_{c}(G/H) $ we have $ R_{H}(\mu_{(\varphi_{\pi_{H}})})=\lambda_{\varphi} $, where $ d\lambda_{\varphi}(xH)=\varphi(xH)d\lambda(xH) $ and $ d\mu_{(\varphi_{\pi_{H}})}(x)=\varphi_{\pi_{H}}(x)d\mu(x) $.\end{lemma}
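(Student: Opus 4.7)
The plan is to prove the stated equality of complex measures on $G/H$ by pairing both sides with an arbitrary test function $\psi \in C_c(G/H)$; since $R_H(\mu_{(\varphi_{\pi_H})})$ and $\lambda_\varphi$ both lie in $M(G/H)$, the Riesz representation theorem ensures that agreement of the two linear functionals on $C_c(G/H)$ forces equality as Radon measures.

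First I would unwind the left-hand side using the definition of $R_H$ in \eqref{eq21} together with $d\mu_{(\varphi_{\pi_H})}(x)=\varphi_{\pi_H}(x)\,d\mu(x)$:
$$R_H(\mu_{(\varphi_{\pi_H})})(\psi) \;=\; \mu_{(\varphi_{\pi_H})}(\psi_{\pi_H}) \;=\; \int_G \psi_{\pi_H}(x)\,\varphi_{\pi_H}(x)\,d\mu(x).$$
Then I would invoke the quotient integral formula from the preliminaries on the function $f=\psi_{\pi_H}\,\varphi_{\pi_H}\in C_c(G)$ to descend the integral from $G$ to $G/H$:
$$\int_G \psi_{\pi_H}(x)\,\varphi_{\pi_H}(x)\,d\mu(x) \;=\; \int_{G/H}\!\int_H \frac{\psi_{\pi_H}(xh)\,\varphi_{\pi_H}(xh)}{\rho(xh)}\,dh\,d\lambda(xH).$$

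Two simplifications collapse the inner integral. First, $\psi_{\pi_H}$ and $\varphi_{\pi_H}$ are by construction constant on right $H$-cosets, so $\psi_{\pi_H}(xh)\varphi_{\pi_H}(xh) = \psi(xH)\varphi(xH)$. Second, compactness of $H$ forces $\Delta_H \equiv 1$, and the image of $\Delta_G\big|_H$ is a compact subgroup of $\mathbb{R}^+$, hence trivial, so the defining cocycle relation for a rho-function gives $\rho(xh)=\rho(x)$ for every $h\in H$. Factoring these constants outside the Haar integral over $H$ and using the standard compatible normalization $\int_H dh/\rho(x)=1$ available in the compact case, the inner integral equals $\psi(xH)\varphi(xH)$. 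Substituting back produces
$$R_H(\mu_{(\varphi_{\pi_H})})(\psi) \;=\; \int_{G/H} \psi(xH)\,\varphi(xH)\,d\lambda(xH) \;=\; \lambda_\varphi(\psi),$$
which is the required identity.

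The only delicate point, rather than a genuine obstacle, is the bookkeeping of normalizations between the Haar measure on $H$ and the rho-function $\rho$; the argument tacitly exploits that compactness of $H$ makes $\rho$ right-$H$-invariant, so that the inner Haar integral decouples as a multiplicative constant that can be absorbed into the compatibility between $dh$ and $\rho$. Everything else is a routine unpacking of the definitions of $R_H$, $\mu_{(\varphi_{\pi_H})}$, and $\lambda_\varphi$ combined with one application of the quotient integral formula.
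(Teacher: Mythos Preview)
Your proposal is correct and follows essentially the same approach as the paper: test against an arbitrary $\psi\in C_c(G/H)$, unwind the definition of $R_H$, apply the quotient integral formula to $\psi_{\pi_H}\varphi_{\pi_H}\in C_c(G)$, and simplify the inner $H$-integral. The only cosmetic difference is in the last step: the paper recognizes the inner integral $\int_H \psi_{\pi_H}(xh)/\rho(xh)\,dh$ as $P_H(\psi_{\pi_H})(xH)$ and invokes the identity $P_H(\psi_{\pi_H})=\psi$ from the preliminaries, whereas you expand it by hand using the right-$H$-invariance of $\rho$ and the normalization of $dh$; these are two phrasings of the same computation.
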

\begin{proof}
Let $ \psi \in C_{c}(G/H), $ then we have
\begin{align*}
R_{H}(\mu_{(\varphi_{\pi_{H}})})(\psi)
&=\mu_{(\varphi_{\pi_{H}})}(\psi_{\pi_{H}})\\
&=\int_{G}{\psi_{\pi_{H}}(x) d\mu_{(\varphi_{\pi_{H}})}(x)}\\
&=\int_{G}{\psi_{\pi_{H}}(x)\varphi_{\pi_{H}}(x)d\mu(x)}\\
&=\int_{G/H}{\int_{H}{\frac{\psi_{\pi_{H}}(xh)\varphi_{\pi_{H}}(xh)}{\rho(xh)}dh d\lambda(xH)}}\\
&=\int_{G/H}{\varphi(xH)\int_{H}{\frac{\psi_{\pi_{H}}(xh)}{\rho(xh)}}dh d\lambda(xH)}\\
&=\int_{G/H}{P_{H}(\psi_{\pi_{H}})(xH)d\lambda_{\varphi}(xH)}\\
&=\lambda_{\varphi}(P_{H}(\psi_{\pi_{H}}))\\
&=(\lambda_{\varphi})(\psi).
\end{align*}
\end{proof}
Consider the notations as in the Proposition \ref{pr7}. Put
\begin{align*}
\Lambda:=\lbrace \lambda_{\varphi} : \varphi \in L^{1}(G/H)     \rbrace.
\end{align*}
\begin{theorem}
Let $ \lambda $ be a strongly quasi-invariant measure on $ G/H $ arises from the rho-function $ \rho $. The Banach algebra $\left(  L^{1}(G/H), \lambda \right) $ is a two-sided ideal of the Banach algebra $ M(G/H) $.  
\end{theorem}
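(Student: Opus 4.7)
The plan is to transfer the ideal question to the classical measure algebra $M(G)$ via the isometric isomorphism $R_H|_{M(G:H)}$ of Proposition \ref{pr3}, exploit that $L^1(G)$ is a two-sided ideal of $M(G)$, and then return to $M(G/H)$. As a preparatory step I first extend Lemma \ref{le9} from $\varphi\in C_c(G/H)$ to $\varphi\in L^1(G/H)$: using surjectivity of $T_H:L^1(G:H)\to L^1(G/H)$, I pick $\tilde\varphi\in L^1(G:H)$ with $T_H\tilde\varphi=\varphi$ and a direct quotient integral formula computation (identical in structure to the proof of Lemma \ref{le9}, now justified by Fubini on $L^1$) shows $R_H(\mu_{\tilde\varphi})=\lambda_\varphi$. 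Injectivity of $R_H$ on $M(G:H)$ then identifies $(\lambda_\varphi)_{P_H}$ with $\mu_{\tilde\varphi}$, so that $\Lambda$ corresponds under $R_H$ precisely to $\{\mu_f:f\in L^1(G:H)\}\subseteq M(G:H)$.

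For the left-ideal property, given $\sigma\in M(G/H)$ and $\varphi\in L^1(G/H)$, I would write
\[
\sigma\ast\lambda_\varphi \;=\; R_H\bigl(\sigma_{P_H}\ast\mu_{\tilde\varphi}\bigr)
\]
by Definition \ref{def6} and Remark \ref{re7}(i). Since $L^1(G)$ is a two-sided ideal of $M(G)$, the convolution $\sigma_{P_H}\ast\mu_{\tilde\varphi}$ equals $\mu_F$ for some $F\in L^1(G)$, given by the standard formula $F(y)=\int_G \tilde\varphi(x^{-1}y)\,d\sigma_{P_H}(x)$. The right-$H$-invariance of $\tilde\varphi$ yields $F(yh)=F(y)$ at once, hence $F\in L^1(G:H)$, and therefore $\sigma\ast\lambda_\varphi=R_H(\mu_F)=\lambda_{T_HF}\in\Lambda$.

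The right-ideal property $\lambda_\varphi\ast\sigma\in\Lambda$ is analogous but more delicate, and I expect it to be the main obstacle. Here $\lambda_\varphi\ast\sigma=R_H(\mu_{\tilde\varphi}\ast\sigma_{P_H})$, and the right-multiplication change of variable in the convolution formula introduces a modular factor:
\[
\mu_{\tilde\varphi}\ast\sigma_{P_H}=\mu_{F'},\qquad F'(z)=\int_G \tilde\varphi(zy^{-1})\Delta_G(y)\,d\sigma_{P_H}(y).
\]
The required invariance $F'(zh)=F'(z)$ now has to come from the right-$H$-invariance of $\sigma_{P_H}$ rather than from $\tilde\varphi$: substituting $y\to yh$ in the $\sigma_{P_H}$-integral is legitimate precisely because $\sigma_{P_H}\in M(G:H)$, and compactness of $H$, i.e.\ $\Delta_G|_H\equiv 1$, lets the modular factor $\Delta_G(yh)=\Delta_G(y)$ absorb cleanly, yielding $F'\in L^1(G:H)$ and hence $\lambda_\varphi\ast\sigma=\lambda_{T_HF'}\in\Lambda$. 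Closedness of $\Lambda$ in $M(G/H)$ is automatic from the isometric embedding in Proposition \ref{pr7}, so $\Lambda$ is a closed two-sided ideal, which is the content of the theorem.
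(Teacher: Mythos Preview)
Your argument is correct and follows essentially the same route as the paper: lift the product to $M(G)$ via the isometric isomorphism $R_H|_{M(G:H)}$, use that $L^1(G)$ is a two-sided ideal of $M(G)$, and verify that the resulting density is right-$H$-invariant so that Lemma~\ref{le9} brings you back into $\Lambda$. The only cosmetic difference is that the paper packages your direct invariance check into an appeal to Proposition~\ref{pr22} (together with the Radon--Nikodym characterization $\Lambda=\{\sigma\in M(G/H):\sigma\ll\lambda\}$) and treats only the side $\lambda_\varphi\ast\sigma$ explicitly, whereas you spell out both sides by hand; your version is in fact the more careful of the two.
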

\begin{proof}
By the Proposition \ref{pr7} there is a one to one corresponding between $ L^{1}(G/H) $ and the range of the injection $ \varphi\mapsto\lambda_{\varphi} $, so it is enough to show that $ \Lambda $ is a two-sided ideal of $M(G/H) $.  To do this it is enough to show that $ \lambda_{\varphi} \ast \sigma <<\lambda $ for all $\lambda_{\varphi} \in \Lambda $ and $ \sigma \in M(G/H) $, since $ \Lambda $ consists precisely of those $ \sigma \in M(G/H) $ such that $ \sigma <<\lambda $. Now, by the definition of convolution on $ M(G/H),$ we have $ \lambda_{\varphi} \ast \sigma=R_{H}(\left( \lambda_{\varphi}\right) _{P_{H}}\ast \sigma_{P_{H}}) $, but
$ \left( \lambda_{\varphi}\right) _{P_{H}} \ast \sigma_{P_{H}}=\left( \lambda_{\varphi} \ast \sigma \right) _{P_{H}} \in M(G : H) $ and so by Proposition \ref{pr22} it equals to $ \mu_{\psi_{\pi_{H}}} $ for some $ \psi$ in  $L^{1}(G/H) $. Thus $ R_{H}(\left( \lambda_{\varphi}\right) _{P_{H}} \ast \sigma_{P_{H}})=R_{H}(\mu_{\psi_{\pi_{H}}})=\lambda_{\psi} \in \Lambda.$ Therefore, $ \lambda_{\varphi} \ast \sigma=\lambda_{\psi} << \lambda$ and the proof is complete.
\end{proof}
Fix a strongly quasi-invariant measure $ \lambda $ on $ G/H $ arises from the rho-function $ \rho $. Here and in the rest of sequel we set 
\begin{center}
$ C_c^{\rho}(G : H)=\lbrace \varphi_{\pi_{H}}^{\rho}:=\varphi \circ \pi_{H}\cdot \rho^{1/p} : \varphi \in C_c(G/H) \rbrace$,\\
\end{center}
and also take $ L^{p}(G : H)=\overline{C_c^{\rho}(G : H)}^{\Vert \cdot \Vert_{p}}, $ for all $1 \leq p < \infty. $
In a similar calculation in \cite{3, 7, 8}, one can see that $ C_c^{\rho}(G : H) $ is a left ideal of the algebra $ C_c(G) $. Then $ T_{H}^{p}: L^{p}(G) \rightarrow L^{p}(G/H) $ defined by $ T_{H}^{p}(f)(xH)=\int_{H}{\frac{f(xh)}{\rho(xh)^{1/p}}dh} $ is a surjective and bounded operator with $ \Vert T_{H}^{p} \Vert \leq 1 $. 
Consider the surjectivity of $ T_{H}^{p}: L^{p}(G : H) \rightarrow L^{p}(G/H) $, for all $ 1 \leq p < \infty $. By using the Proposition 3. 39 in \cite{5}, we know that $ \varphi_{\pi_{H}} \ast \psi_{\pi_{H}} $ belongs to $ L^{p}(G : H) $. Then one can define:
\begin{align*}
\varphi \ast \psi(xH)
&=T_{H}^{p}(\varphi_{\pi_{H}}\ast \psi_{\pi_{H}})(xH)=\int_{G/H}{\int_{H}{\varphi(yH)\psi(hy^{-1}xH)\left( \frac{\rho(hy^{-1}x)}{\rho(x)}\right) ^{1/p}dhd\lambda(yH)}},
\end{align*}
for all $ \varphi \in L^{1}(G/H) $ and $ \psi \in L^{p}(G/H) $.
Let $ \sigma \in M(G/H) $ and $ \varphi \in L^{p}(G/H) $, there exist $ \sigma_{P_{H}}\in M(G : H) $ and $ \varphi_{\pi_{H}}\in L^{p}(G : H) $ such that $ T_{H}^{p}(\varphi_{\pi_{H}}) $ and $ R_{H}(\sigma_{P_{H}})=\sigma $. Then we define the function $ \sigma \ast \varphi $ in a natural way as follows:
\[ \sigma \ast \varphi (xH)=T_{H}^{p}( \sigma_{P_{H}} \ast \varphi_{\pi_{H}})(xH).\]
But we have the following calculation:
\begin{align*}
\sigma \ast \varphi (xH)
&=T_{H}^{p}( \sigma_{P_{H}} \ast \varphi_{\pi_{H}})(xH)\\
&=\int_{H}{\frac{( \sigma_{P_{H}} \ast \varphi_{\pi_{H}})(x\eta)}{\rho(x\eta)^{1/p}}d\eta}\\
&=\int_{H}{\frac{\int_{G}{\varphi_{\pi_{H}}(y^{-1}x\eta)d\sigma_{P_{H}}(y)}}{\rho(x\eta)^{1/p}}d\eta}\\
&=\int_{H}{\frac{1}{\rho(x)^{1/p}}\int_{G/H}{\int_{H}{\varphi_{\pi_{H}}(h^{-1}y^{-1}x\eta)dh d\sigma(yH)d\eta}}}\\
&=\int_{G/H}{\int_{H}{\int_{H}{\varphi(h^{-1}y^{-1}xH)\left(  \frac{\rho(h^{-1}y^{-1}x)}{\rho(x)}\right)^{1/p}dh dh d\sigma(yH)}}}\\
&=\int_{G/H}{\int_{H}{\varphi(h y^{-1}xH)\left( \frac{\rho(h y^{-1}x)}{\rho(x)}\right) ^{1/p}dh d\sigma(yH)}},
\end{align*}
and in a similar calculation we get
\begin{align*}
\varphi \ast \sigma
=\int_{G/H}{\Delta(y^{-1})\int_{H}{\varphi(x h y^{-1}H)\left( \frac{\rho(x h y^{-1})}{\rho(x)}\right) ^{1/p}dh d\sigma(yH)}},
\end{align*}
\begin{proposition}
Suppose $ 1 \leq p < \infty $ and let $ \sigma \in M(G/H) $ and also $ \varphi \in L^{p}(G/H) $. Then $ \sigma \ast \varphi \in L^{p}(G/H)  $ and $ \Vert \sigma \ast \varphi \Vert_{p} \leq \Vert \sigma \Vert \Vert \varphi \Vert_{p} $
\end{proposition}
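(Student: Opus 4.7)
The plan is to reduce the inequality on $G/H$ to the classical boundedness of convolution between a measure and an $L^p$ function on the ambient group $G$, using the definition $\sigma \ast \varphi = T_H^p(\sigma_{P_H}\ast\varphi_{\pi_H})$ supplied just before the statement. Concretely, for $\sigma\in M(G/H)$ and $\varphi\in L^p(G/H)$, I lift to $\sigma_{P_H}\in M(G:H)\subset M(G)$ and to $\varphi_{\pi_H}\in L^p(G:H)\subset L^p(G)$ using the bijection from Proposition \ref{pr3} and the isometric correspondence $\varphi\leftrightarrow\varphi\circ\pi_H\cdot\rho^{1/p}$ built into the definition of $L^p(G:H)$. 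Then I would apply $T_H^p$ to transport the result back to $G/H$.

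The key analytic step is the classical fact that for $\mu\in M(G)$ and $f\in L^p(G)$ one has $\mu\ast f\in L^p(G)$ with $\|\mu\ast f\|_p\leq \|\mu\|\,\|f\|_p$; this is a standard application of Minkowski's integral inequality together with left-invariance of Haar measure (noting $\|L_y f\|_p = \|f\|_p$ since $G$ is unimodular only locally, but the bound passes through via $\mu$-integration of the translates). Applying this to $\mu=\sigma_{P_H}$ and $f=\varphi_{\pi_H}$ gives $\sigma_{P_H}\ast\varphi_{\pi_H}\in L^p(G)$ with norm at most $\|\sigma_{P_H}\|\,\|\varphi_{\pi_H}\|_p$. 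A short check, based on the fact that right $H$-invariance of $g$ is preserved by left convolution with any $\mu\in M(G)$, shows that $\sigma_{P_H}\ast\varphi_{\pi_H}$ actually lands in $L^p(G:H)$, so the subsequent application of $T_H^p$ is natural.

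Next I would invoke the two isometries: Proposition \ref{pr3} gives $\|\sigma_{P_H}\|_{M(G)}=\|\sigma\|_{M(G/H)}$, and the quotient integral formula applied to $|\varphi\circ\pi_H|^p\cdot\rho$ yields $\|\varphi_{\pi_H}\|_{L^p(G)}=\|\varphi\|_{L^p(G/H)}$ (using $\int_H dh = 1$ and the rho-function identity $\rho(xh)=\rho(x)$ valid because $H$ is compact and hence unimodular). Finally, since $\|T_H^p\|\leq 1$, stringing these together gives
\[
\|\sigma\ast\varphi\|_p = \|T_H^p(\sigma_{P_H}\ast\varphi_{\pi_H})\|_p \leq \|\sigma_{P_H}\ast\varphi_{\pi_H}\|_p \leq \|\sigma_{P_H}\|\,\|\varphi_{\pi_H}\|_p = \|\sigma\|\,\|\varphi\|_p,
\]
and $\sigma\ast\varphi\in L^p(G/H)$ by construction. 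The main obstacle I anticipate is not the final norm chain but the verification that $\sigma\ast\varphi$ is well-defined as an element of $L^p(G/H)$, i.e.\ that $\sigma_{P_H}\ast\varphi_{\pi_H}$ belongs to the subspace $L^p(G:H)$ on which $T_H^p$ is an isometry; once that invariance check is in place, the rest is bookkeeping with the isometries already proved.
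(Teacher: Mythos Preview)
Your approach is essentially the same as the paper's: lift via $\sigma\mapsto\sigma_{P_H}$ and $\varphi\mapsto\varphi_{\pi_H}$, use the classical bound $\|\mu\ast f\|_p\le\|\mu\|\,\|f\|_p$ on $G$, and then transfer back via the isometries of $R_H$ on $M(G:H)$ and of $T_H^p$ on $L^p(G:H)$. You supply more detail than the paper does (the invariance check that $\sigma_{P_H}\ast\varphi_{\pi_H}\in L^p(G:H)$, and the quotient-integral-formula computation of $\|\varphi_{\pi_H}\|_p$), and one aside is slightly garbled---$\|L_y f\|_p=\|f\|_p$ holds for any locally compact group by left invariance of Haar measure, no unimodularity needed---but the argument is correct and matches the paper's route.
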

\begin{proof}
Fix $ \sigma \in M(G/H) $ and $ \varphi \in L^{p}(G/H) $. Considering definition of $ T_{H}^{p} $, the function $ \sigma \ast \varphi $ belongs to $ L^{p}(G/H) $. Using the fact that the mapping $ T_{H}^{p} $ and $ R_{H} $ are isometric on $ L^{p}(G : H) $ and $ M(G : H) $, respectively, we get
\[\Vert T_{H}^{p}( \sigma_{P_{H}} \ast \varphi_{\pi_{H}}) \Vert_{p}=\Vert \sigma_{P_{H}} \ast \varphi_{\pi_{H}} \Vert_{p}\leq\Vert \sigma_{P_{H}} \Vert\Vert \varphi_{\pi_{H}} \Vert_{p}=\Vert R_{H}\left( \sigma_{P_{H}}\right)  \Vert\Vert T_{H}^{p}\left( \varphi_{\pi_{H}}\right)\Vert_{p}. \]
This implies $\Vert \sigma \ast \varphi \Vert_{p}\leq\Vert \sigma \Vert\Vert \varphi \Vert_{p}.$
\end{proof}


\begin{thebibliography}{6}
\bibitem{1} S. R. Deans, \textit{The Radon Transform and Some of its Applications}, Wiley, New
York, 1983.
\bibitem{2} A. Deitmar and S. Echterhoof, \textit{Principles of Harmonic analysis}, Springer, New
York, 2009.
\bibitem{3} A. G. Farashahi, \textit{Convolotion and involution of function spaces of homogeneous spaces}, Bull. Malays. Math. Sci. Soc. (2) 36(4) (2013), 1109-1122.
\bibitem{4} J. M. G. Fell and R. S. Doran, \textit{Representations of *-Algebras, Locally Compact Groups, and Banach *-Algebraic Bundles}, Academic press, San Diego, 1988.
\bibitem{5} G. B. Folland, \textit{A Course in Abstract Harmonic Analysis}, CRC press, 1995.
\bibitem{6} S. Helgason, \textit{Integral Geometry and Radon Transform}, Springer, New York, 2011.
\bibitem{7} R. A. Kamyabi-Gol and N. Tavalaei, \textit{Convolution and Homogeneous Spaces}, Bulletin of the Iranian Mathematical Society, 35(2009), 129-146.
\bibitem{8} R. Reiter and J. D. Stegeman, \textit{Classical Harmonic Analysis }, 2nd Ed, Oxford university press, New York, 2000.
\bibitem{9} F. Esmaeelzadeh, A. Erfanian Attar and H. R. Ebrahimi Vishki, \textit{Banach Algebra Structure on Homogeneous Spaces }, Asian Journal of Current Engineering and Maths, , 2012.
\end{thebibliography}
\end{document}